\newcommand{\hide}[1]{}
\numberwithin{equation}{section}
\theoremstyle{plain}
\newtheorem{thm}{Theorem}[section]
\newtheorem{prop}[thm]{Proposition}
\newtheorem{conj}[thm]{Conjecture}
\newtheorem{cor}[thm]{Corollary}
\newtheorem{lem}[thm]{Lemma}
\theoremstyle{definition}
\newtheorem{defi}[thm]{Definition}
\theoremstyle{remark}
\newtheorem{rem}[thm]{Remark}
\newcommand{\BA}{{\mathcal B}{\mathcal A}}
\newcommand{\C}{{\mathcal C}}
\newcommand{\MV}{{\mathcal M}{\mathcal V}}
\newcommand{\X}{{\mathcal X}}
\newcommand{\RealNumbers}{{\mathbb R}}
\newcommand{\Integers}{{\mathbb Z}}
\newcommand{\ComplexNumbers}{{\mathbb C}}
\newcommand{\RationalNumbers}{{\mathbb Q}}
\newcommand{\linsys}[1]{{\mid}#1{\mid}}
\newcommand{\NE}{{\rm NE}}
\newcommand{\Bir}{{\rm Bir}}
\def\cal{\mathcal}
\def\Bbb{\mathbb}
\newenvironment{NB}{
\color{red}{\bf NB}. \footnotesize 
}{}
\newcommand{\Aut}{\operatorname{Aut}}
\newcommand{\Amp}{\operatorname{Amp}}
\newcommand{\Nef}{\operatorname{Nef}}
\newcommand{\Eff}{\operatorname{Eff}}
\begin{document}

\title[]
{A proof of the Kawamata-Morrison Cone Conjecture for 
holomorphic symplectic varieties 
of $K3^{[n]}$ or generalized Kummer deformation type}
\author{Eyal Markman and Kota Yoshioka}
\address{Department of Mathematics and Statistics, 
University of Massachusetts, Amherst, MA 01003, USA}
\email{markman@math.umass.edu}
\address{Department of Mathematics, Faculty of Science, 
Kobe University, Kobe, 657, Japan }
\email{yoshioka@math.kobe-u.ac.jp}
\date{\today}

\begin{abstract}
We prove a version of the Kawamata-Morrison ample cone conjecture for 
projective irreducible holomorphic symplectic manifolds 
deformation equivalent to 
either the Hilbert scheme of $n$ points on a $K3$ surface, or
a generalized Kummer variety.
\end{abstract}

\maketitle

\tableofcontents

%
\section{Introduction}

An {\em irreducible holomorphic symplectic manifold} is a 
simply connected compact K\"{a}hler manifold $X$, 
such that $H^0(X,\wedge^{2}T^*X)$ is one-dimensional, 
spanned by an everywhere non-degenerate holomorphic $2$-form.
There is a natural integral non-degenerate  symmetric bilinear pairing on $H^2(X,\Integers)$, 
of signature $(3,b_2(X)-3)$, called the
Beauville-Bogomolov-Fujiki pairing \cite{beauville}. 
It is positive on K\"{a}hler classes and indivisible
in the sense that $\gcd\{(x,y) \ : \ x,y\in H^2(X,\Integers)\}=1$. 
Let $X$ be a projective irreducible holomorphic symplectic manifold. 
Set $\Lambda:=H^{1,1}(X,\Integers)$. 
Then $\Lambda$ is a lattice of signature $(1,\rho-1),$ for a positive integer $\rho$. 
In particular, the rational vector space $\Lambda_\RationalNumbers$ is self-dual with respect to the
restriction of the Beauville-Bogomolov-Fujiki pairing. 

Let $\C\subset \Lambda_\RealNumbers$ be the positive cone. $\C$ is the connected component of the cone
$\{\lambda\in \Lambda_\RealNumbers \ : (\lambda,\lambda)>0\}$, which contains the ample cone of $X$. 
Denote the ample cone of $X$ by $\Amp_X\subset \C$. Let $\Nef_X$ be the closure of
$\Amp_X$ in $\Lambda_\RealNumbers$. 
Let $\Nef_X^*\subset \Lambda_\RealNumbers$ be the dual cone. 
A non-zero class $\lambda$ of $\Nef_X^*$ is {\em extremal} 
if every decomposition $\lambda=\lambda_1+\lambda_2$,
with $\lambda_i\in \Nef_X^*$,  consists of $\lambda_i$ which are scalar multiples of $\lambda$. 
The Beauville-Bogomolov-Fujiki pairing identifies $H^2(X,\RationalNumbers)$ with $H_2(X,\RationalNumbers)$
and the dual cone $\Nef_X^*$ gets identified with the Mori cone $\overline{\NE}_1(X)$. Under this identification, 
extremal classes in the above sense generate extremal rays of the Mori cone.

\begin{conj}
\label{conj-self-intersection-of-extremal-classes-bounded-below}
Let $Y$ be an irreducible holomorphic symplectic manifold birational to $X$.
The set $\{(e,e) \ : \ 
e \ \mbox{is an integral, primitive, and extremal class in} \ \Nef_Y^*
\}$
is bounded below
by a constant, which depends only on the birational class of $X$.
\end{conj}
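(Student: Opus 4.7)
The plan is to reduce the conjecture to a statement about moduli spaces of Gieseker-stable sheaves on a $K3$ surface (respectively on an abelian surface in the generalized Kummer case) and then to invoke the Bayer--Macr\`i wall-crossing description of the movable cone together with its known analogue for the Kummer type.

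First I would exploit deformation invariance. The value $(e,e)$ depends only on $e$ viewed in the rational lattice $\Lambda_\RationalNumbers$ equipped with the BBF form, so by the Global Torelli Theorem of Verbitsky--Markman every birational class of $K3^{[n]}$-type manifolds (respectively generalized Kummer type) whose transcendental Hodge structure is of a prescribed kind contains a projective model isomorphic to, or birational to, a moduli space $M_H(v)$ of Gieseker-stable sheaves on some projective $K3$ surface $S$ (resp.\ a suitable Kummer-type substack of $M_H(v)$ on an abelian surface $A$). Hence it suffices to establish the bound uniformly over all projective $Y$ birational to such a moduli space, for each Mukai vector $v$ in the relevant deformation class.

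Next I would apply the wall-crossing machinery. On $M_H(v)$ the walls of $\Nef_Y$ inside $\C$, as $Y$ ranges over birational models, are cut out by Mukai vectors $w$ in the extended Mukai lattice $\widetilde{H}(S,\Integers)$ subject to numerical constraints of the form $(w,w)\ge -2$ and $0\le (v,w)\le (v,v)/2$. To each such wall corresponds an extremal class in the dual cone $\Nef_Y^*$, and this extremal class is, up to an explicit positive rational scalar, the orthogonal projection of $w$ to the span of $v^{\perp}$, which is identified with $\Lambda_\RationalNumbers$. A direct computation then expresses $(e,e)$ as a rational function of $(w,w)$, $(v,w)$ and $(v,v)$, and the numerical constraints on $w$ force $(e,e)\ge -C((v,v))$ for a universal constant depending only on $(v,v)$, i.e.\ only on the deformation type of $X$.

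The principal obstacle will be uniformity across \emph{all} birational models $Y$ simultaneously: a priori, as one passes to a more distant birational model the corresponding wall class $w$ could have arbitrarily large Mukai square, and one must rule out the possibility that the induced $e$ then acquires arbitrarily negative BBF square. The key input here is the monodromy characterization of the wall divisors and the description of the monodromy action on the Mukai lattice due to Markman; this shows that the classes $w$ realizing walls fall into finitely many monodromy orbits of bounded-discriminant types (spherical, isotropic, and the finitely many ``flopping'' types specific to the dimension), each of which individually satisfies the required bound. For the generalized Kummer deformation type the same argument applies after replacing $\widetilde{H}(S,\Integers)$ by the Mukai lattice of $A$ and restricting to the fibre of the Albanese-type map from $M_H(v)$; the wall-crossing description in that setting, due to Yoshioka and collaborators, gives a parallel numerical constraint from which the bound follows in the same way.
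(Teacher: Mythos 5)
The first thing to note is that the paper does not prove this statement: it is stated as a \emph{Conjecture}, the main theorem is explicitly conditional on it, and for the two deformation types in the title the paper simply cites Mongardi, Bayer--Hassett--Tschinkel, Bayer--Macr\`i and Yoshioka for its validity. So there is no in-paper proof to compare yours against; what you have written is a plan for reproving the cited external results. That plan does follow the strategy of those references (and of a computation the authors left commented out in the source): once one knows that every extremal class $e$ of $\Nef_Y^*$ is, up to a positive scalar, the image of a Mukai vector $w$ in the algebraic Mukai lattice with $(w,w)\ge -2$ and $0\le (v,w)\le (v,v)/2$ under the orthogonal projection $w\mapsto w-\frac{(v,w)}{(v,v)}v$ onto $v^\perp_\RationalNumbers\cong\Lambda_\RationalNumbers$, the bound is the one-line computation $(e,e)=(w,w)-\frac{(v,w)^2}{(v,v)}\ge -2-\frac{(v,v)}{4}$, which depends only on $(v,v)=2n-2$ and recovers the known bound $-\frac{n+3}{2}$.

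Two corrections to the plan itself. First, the reduction to moduli spaces is not that ``every birational class contains a model birational to $M_H(v)$'' --- that is false in general, since the transcendental lattice of an arbitrary $X$ of $K3^{[n]}$-type need not admit the embedding required to realize $X$ as (a birational model of) a moduli space of sheaves on a $K3$. The actual reduction in Mongardi and in Bayer--Hassett--Tschinkel deforms the pair $(X,e)$ to a pair consisting of a moduli space and a corresponding class, using the monodromy invariance of the numerical characterization of extremal (wall) classes; this is the genuinely delicate step, and your sketch elides it. Second, the ``principal obstacle'' you identify --- uniformity over distant birational models --- is not an obstacle at all: a birational map induces a Hodge isometry on $H^2$ (O'Grady, cited as \cite[Prop. 1.6.2]{ogrady-weight-two} in the paper), so $(e,e)$ is unchanged under $f_*$, and the constraints $(w,w)\ge-2$, $0\le(v,w)\le(v,v)/2$ already yield the uniform bound directly; no finiteness of monodromy orbits of wall classes is needed for this conjecture (it is needed elsewhere, e.g.\ for the finiteness statement of Proposition \ref{prop-set-of-intersectiong-walls-is-finite}, which the paper proves by an elementary lattice argument instead).
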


We say that $X$ is of {\em $K3^{[n]}$-type}, if $X$ is deformation equivalent to the
Hilbert scheme $S^{[n]}$ of length $n$ subschemes of a $K3$ surface $S$.
The conjecture is known to hold if $X$ is of  $K3^{[n]}$-type, by independent results of
Mongardi \cite[Theorem 1.3]{mongardi} and Bayer-Hassett-Tschinkel \cite{BHT}.
These two papers rely on the proof of the analogous result for moduli spaces of sheaves 
on $K3$ surfaces, by Bayer and Macri \cite[Theorem 12.1]{bayer-macri-mmp}. 
The conjecture is also known when $X$ is 
of generalized Kummer type, that is,
$X$ is deformation equivalent to one of the generalized Kummer
varieties associated to an abelian surface, by the work of the second author \cite{yoshioka-ample-cone}
and the results of Mongardi \cite{mongardi} and Bayer-Hassett-Tschinkel \cite{BHT}.

Let $\Nef_X^+$ be the convex hull of 
$\Nef_X\cap\Lambda_\RationalNumbers$
in $\Lambda_\RealNumbers$. We have the inclusions
\[
\Amp_X\subset \Nef_X^+ \subset \Nef_X.
\]

\begin{defi}
A {\em rational polyhedral cone} in $ \Lambda_\RealNumbers$ is a closed convex cone spanned by a finite set of 
(integral) classes of $\Lambda$. 
\end{defi}

The main result of this note is the following version of the Kawamata-Morrison ample cone conjecture.
\begin{thm} 
[Theorem \ref{thm-ample-cone-conj} below]
\label{thm-introduction}
Assume that Conjecture \ref{conj-self-intersection-of-extremal-classes-bounded-below} holds for $X$. Then 
there exists a rational polyhedral cone $D\subset \Nef_X^+$, which is a fundamental domain for the 
action of the automorphism group of $X$ on  $\Nef_X^+$.
\end{thm}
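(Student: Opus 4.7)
The plan is to deduce Theorem \ref{thm-introduction} from a general result of Looijenga on rational polyhedral fundamental domains for arithmetic group actions on convex cones in hyperbolic lattices. Looijenga's theorem applies to pairs $(\Gamma, C^+)$ consisting of an arithmetic subgroup $\Gamma$ of $\LieO(\Lambda)$ acting on a $\Gamma$-invariant convex subcone $C^+$ of the positive cone $\C$, provided that the set of supporting hyperplanes of $C^+$ is locally finite and consists of finitely many $\Gamma$-orbits of hyperplanes $e^\perp$ cut out by integral classes $e$. The target is to take $C^+ = \Nef_X^+$ and $\Gamma$ the image of $\Aut(X)$ in $\LieO(\Lambda)$.

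First, I would verify that the image of $\Aut(X)$ in $\LieO(\Lambda)$ is an arithmetic group. By the global Torelli theorem for irreducible holomorphic symplectic manifolds (Verbitsky, Markman), the image of the full automorphism group in the orthogonal group of $H^2(X,\Integers)$ has finite index in the group of monodromy Hodge isometries that preserve the ample cone $\Amp_X$. Since the monodromy group itself is arithmetic in $\LieO(H^2(X,\Integers))$, restriction to the Hodge substructure and passage to a stabilizer of a connected component of $\C$ preserves arithmeticity. Second, I would identify the walls of $\Nef_X^+$. Dually to the extremal ray description of $\overline{\NE}_1(X) = \Nef_X^*$, each supporting hyperplane of $\Nef_X$ inside $\C$ is of the form $e^\perp$ for some primitive integral extremal class $e \in \Nef_X^*$ with $(e,e)<0$. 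The passage from $\Nef_X$ to its rational hull $\Nef_X^+$ serves precisely to discard the irrational boundary directions, so the wall structure is determined by these negative extremal classes.

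Third, this is the step where Conjecture \ref{conj-self-intersection-of-extremal-classes-bounded-below} enters decisively. By hypothesis, the self-intersections $(e,e)$ of primitive integral extremal classes of the Mori cone are bounded below by a constant depending only on the birational class of $X$. A standard reduction-theoretic argument for arithmetic groups acting on Lorentzian lattices --- namely, that the set of primitive integral vectors of bounded negative square in a fundamental domain for $\Gamma$ on $\C$ is finite --- implies that these extremal classes fall into finitely many $\Aut(X)$-orbits. One must further check that the configuration of walls is locally finite on the interior of $\Nef_X^+$; this follows from the classical fact that the negative cone in a hyperbolic lattice accumulates only along the light cone. Combining arithmeticity of the group with finiteness of orbits of walls, Looijenga's theorem yields a rational polyhedral fundamental domain $D \subset \Nef_X^+$.

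The main obstacle I anticipate is the careful identification of walls of $\Nef_X^+$ with orbits of the arithmetic group under the assumption of Conjecture \ref{conj-self-intersection-of-extremal-classes-bounded-below}, together with the verification of local polyhedrality at rational boundary points where $(\lambda,\lambda)=0$ (rational parabolic points). A secondary technical point is that the integral classes cutting out walls need not all come from a single birational model of $X$, but through the birational invariance built into the hypothesis of Conjecture \ref{conj-self-intersection-of-extremal-classes-bounded-below} one can transport wall structures between birational models via parallel transport operators (as in Markman's framework). Once these points are settled, the application of Looijenga's theorem is essentially formal.
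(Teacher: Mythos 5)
There is a genuine gap at the decisive step. You assert that the image of $\Aut(X)$ in $\LieO(\Lambda)$ is arithmetic, and that a ``standard reduction-theoretic argument'' then shows the wall classes of bounded negative square fall into finitely many $\Aut(X)$-orbits. Neither claim holds in the generality needed. By the Torelli theorem the image of $\Aut(X)$ has finite index in the group of monodromy Hodge isometries preserving the ample cone, but that stabilizer is in general of \emph{infinite} index in $\LieO(\Lambda)$ --- it can even be finite while $\LieO(\Lambda)$ is infinite --- so it is not an arithmetic subgroup of $\LieO(\Lambda)$. Reduction theory gives finiteness of orbits of primitive vectors of bounded square under a finite-index subgroup of $\LieO(\Lambda)$; it says nothing about orbits under the possibly much smaller image of $\Aut(X)$. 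Since finiteness of $\Aut(X)$-orbits of walls of $\Nef_X$ is essentially equivalent to the conclusion of the cone conjecture, your argument is circular exactly where the real work has to happen.

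The paper closes this gap by routing the argument through the movable cone. The non-formal input is Markman's Torelli-based theorem that $\Bir(X)$ acts on $\MV^+_X$ with a rational polyhedral fundamental domain $\Pi$ (available because the walls of $\MV_X$ are controlled by the reflection group of prime exceptional divisors, to which reduction theory \emph{does} apply). Conjecture \ref{conj-self-intersection-of-extremal-classes-bounded-below} together with the elementary Proposition \ref{prop-set-of-intersectiong-walls-is-finite} (a rational polyhedral cone meets only finitely many walls $\lambda^\perp$ with $(\lambda,\lambda)>-N$) shows that only finitely many walls from $\Sigma$ cut $\Pi$, so $\Pi$ decomposes into finitely many rational polyhedral subcones $\Pi_i$; each $\Bir(X)$-translate of a $\Pi_i$ meeting the interior of $\Nef_X$ lies entirely inside it, and for fixed $i$ the elements of $\Bir(X)$ carrying $\Pi_i$ into $\Nef_X$ form a single left $\Aut(X)$-coset. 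This yields Corollary \ref{cor-nef-cone-thm}: $\Nef_X^+$ is covered by $\Aut(X)$-translates of finitely many rational polyhedral cones, which is precisely the hypothesis required by the Totaro--Looijenga Dirichlet-domain lemma (Lemma \ref{lemma-totaro}). Your final step agrees with the paper's, but the hypothesis you feed into Looijenga's theorem is not established by your argument; it also disposes of your worries about local finiteness and rational parabolic points, which are absorbed into that lemma once the finite covering is in hand.
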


\begin{rem}
Let $\Eff_X$ be the cone generated by effective divisor classes.
The Kawamata-Morrison cone conjecture is often stated in terms of the cone
$\Nef_X^e:=\Nef_X\cap \Eff_X$ instead of the cone $\Nef_X^+$ 
\cite{kawamata,morrison-compactifications,totaro}.
The inclusion $\Nef_X^e\subset \Nef_X^+$ follows from Bouksom's divisorial Zariski decomposition
for all irreducible holomorphic symplectic manifolds
\cite[Theorem 4.3]{boucksom-zariski-decomposition}. The equality $\Nef_X^e= \Nef_X^+$ is known
when $X$ is of $K3^{[n]}$-type and follows from 
the statement that integral isotropic nef classes are effective
\cite[Cor. 1.6]{markman-lagrangian}. 
 \end{rem}

\hide{
{\it  
The following is just for Kota's understanding:
Let $x$ be a class in the effective cone.
Then there are integral and effective classes $x_1,...,x_n$ such that
$x=\sum_{i=1}^n t_i x_i$, $t_i \in {\Bbb R}_{>0}$. 
By the Zariski decomposition, 
we may assume that $x_1,...,x_k$ are prime exceptional divisors 
and $x_{k+1},...,x_n$ belongs to the closure of positive cone.
We set $y_1:=\sum_{i=1}^k t_i x_i$ and $y_2:=\sum_{i=k+1}^n t_i x_i$.
If $x \in \Nef_X$, then $(x,y_1) \geq 0$ by the characterization of
$\overline{\MV}_X$.
On the other hand $(y_1^2) \leq 0$ and 
the equality implies $y_1=0$ by the .
Hence $(y_2,y_1) >0$ or $y_1=0$.
Then $(x^2)>0$ or $x=y_2$.
For the latter case, $(y_2^2)>0$ or 
$(y_2^2)=0$ and $y_2=t_i x_i$ for some $i$.

Assume that $(x^2)>0$.
We take a rational polyhedral cone $\Delta$
in ${\cal C}$ containing
$x$. 
Since $\Nef_X$ is defined by finitely many walls in $\Delta$,
in the rational polyhedral cone $(\sum_i {\Bbb R}_{\geq 0} x_i)$,
$\Delta \cap \Nef_X 
\cap (\sum_i {\Bbb R}_{\geq 0} x_i)$ is rational polyhedral.
Hence $x=\sum_j t_i' x_i'$, $t_i' \in {\Bbb R}_{>0}$ and
$x_i'$ are rational point of $\Nef_X \cap {\cal C}$.
Thus $x \in \Nef_X^+$. 
}}
A related result is the following.

\begin{cor} [Corollary \ref{cor-finitely-many-isomorphism-classes} below]
Assume that Conjecture \ref{conj-self-intersection-of-extremal-classes-bounded-below} holds for $X$. Then 
the set of isomorphism classes of irreducible holomorphic symplectic manifolds 
in the birational class of $X$ is finite.
\end{cor}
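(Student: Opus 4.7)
The plan is to deduce the corollary from a movable-cone version of Theorem~\ref{thm-introduction}, together with the chamber decomposition of the movable cone indexed by birational models. Any birational map $f:X\dashrightarrow Y$ between irreducible holomorphic symplectic manifolds induces an isometry of integral Hodge structures $f^{*}:H^{2}(Y,\Integers)\to H^{2}(X,\Integers)$ preserving the positive cone and sending $\Amp_Y$ onto a subcone of the interior of the movable cone $\Mov_X$. For $X$ of $K3^{[n]}$-type or of generalized Kummer type, the ``hyperk\"ahler wall-and-chamber decomposition'' (due to Markman and extended by Bayer--Macri, Bayer--Hassett--Tschinkel, and the second author) expresses the interior of $\Mov_X$ as a disjoint union of chambers $f^{*}(\Amp_Y)$, and identifies two birational models $Y_1,Y_2$ as isomorphic varieties precisely when their chambers lie in the same $\Bir(X)$-orbit.

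The next step is a movable-cone analogue of Theorem~\ref{thm-introduction}: under Conjecture~\ref{conj-self-intersection-of-extremal-classes-bounded-below}, there is a rational polyhedral fundamental domain $\Pi\subset \Mov_X^{+}$ for the action of $\Bir(X)$ on $\Mov_X^{+}$, where $\Mov_X^{+}$ denotes the convex hull of $\Mov_X\cap \Lambda_\RationalNumbers$ in $\Lambda_\RealNumbers$. This should follow from the same proof strategy used for Theorem~\ref{thm-introduction}: $\Bir(X)$ acts by isometries on $\Lambda$ preserving $\Mov_X$, and the extremal rays of the dual cone $\Mov_X^{*}$ are generated by classes of the same arithmetic type as the extremal classes of $\Nef_Y^{*}$ for the various birational models $Y$ of $X$, so that the bounded-below self-intersection hypothesis in Conjecture~\ref{conj-self-intersection-of-extremal-classes-bounded-below} transfers directly from the nef setting to the movable one.

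Finally, I would argue that the rational polyhedral cone $\Pi$ can only meet finitely many chambers of the above decomposition. Walls between adjacent nef chambers in the interior of $\Mov_X$ are hyperplanes perpendicular to primitive integral ``wall classes'' whose Beauville--Bogomolov--Fujiki self-intersection is bounded below by a constant (by Conjecture~\ref{conj-self-intersection-of-extremal-classes-bounded-below} applied to all birational models simultaneously) and bounded above by $0$ (since wall classes lie outside $\C$). Inside a rational polyhedral cone contained in $\C$, only finitely many hyperplanes orthogonal to integral classes of bounded self-intersection can meet the closure of a compact slice, by a standard argument exploiting the Lorentzian signature of the BBF form on $\Lambda$. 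Hence $\Pi$ intersects only finitely many chambers, yielding finitely many $\Bir(X)$-orbits of chambers and therefore finitely many isomorphism classes of birational models. The main obstacle is the last step: one must verify that the boundedness supplied by Conjecture~\ref{conj-self-intersection-of-extremal-classes-bounded-below} for each birational model is uniform enough to prevent accumulation of walls in $\Pi$, which in practice reduces to the same arithmetic finiteness statements that drive the proof of Theorem~\ref{thm-introduction}.
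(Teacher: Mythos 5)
Your proposal follows essentially the same route as the paper: a rational polyhedral fundamental domain for $\Bir(X)$ acting on the (convex hull of the rational points of the) movable cone, finiteness of the walls meeting that domain via the Lorentzian signature of the Beauville--Bogomolov--Fujiki form and the lower bound from Conjecture~\ref{conj-self-intersection-of-extremal-classes-bounded-below}, the chamber decomposition of the interior of the movable cone into the images $f_*(\Amp_Y)$, and the identification of isomorphism classes of birational models with $\Bir(X)$-orbits of chambers. Two small corrections to the logical bookkeeping: (i) the rational polyhedral fundamental domain $\Pi\subset\MV^+_X$ is not something to be re-derived ``by the same strategy as Theorem~\ref{thm-introduction}'' under the Conjecture --- in the paper it is the \emph{unconditional} input \cite[Theorem 6.25]{markman-torelli}, and Theorem~\ref{thm-introduction} is deduced \emph{from} it, so your proposed derivation runs the implication backwards and would be circular as stated; the Conjecture enters only through the wall-finiteness step (Proposition~\ref{prop-set-of-intersectiong-walls-is-finite}). (ii) In that finiteness step, $\Pi$ need not admit a compact slice contained in $\C$, since its extremal rays may be isotropic; this is exactly the case the paper isolates in Lemma~\ref{lem:P-finite}, so ``a standard Lorentzian argument on a compact slice'' needs that extra care at the boundary of $\overline{\C}$.
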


A version of Theorem \ref{thm-introduction} was proven independently by 
Amerik and Verbitsky for irreducible holomorphic symplectic manifolds, not necessarily projective
\cite{amerik-verbitsky}.

{\bf Acknowledgements:} We thank Artie Prendergast-Smith for pointing out to us that the main
Theorem \ref{thm-ample-cone-conj} follows from Corollary \ref{cor-nef-cone-thm}.
We thank Eduard Looijenga for the helpful reference to his work \cite{looijenga}. 
The work of Eyal Markman was partially supported by a grant  from the Simons Foundation (\#245840), and by NSA grant H98230-13-1-0239. 
The work of Kota Yoshioka was partially supported by
the Grant-in-aid for 
Scientific Research (No.\ 22340010), JSPS.

%
\section{Proof of the cone conjecture}
Let $X$ be a projective irreducible holomorphic symplectic manifold. 
Set $\Lambda:=H^{1,1}(X,\Integers)$. 
Let $\C\subset \Lambda_\RealNumbers$ be the positive cone
and $\overline{\C}$ its closure  in $\Lambda_\RealNumbers$.
A divisor class $D$ is {\em movable}, if the base locus of the linear system $\linsys{D}$ has codimension $\geq 2$
in $X$. The {\em movable cone} 
$\MV_X\subset \overline{\C}$ is the cone generated by movable divisor classes. 
Let $\MV^+_X$ be the convex hull\footnote{When $X$ is of $K3^{[n]}$-type or
of generalized Kummer type, then $\MV^+_X=\MV_X$, by 
\cite[Cor. 19]{hassett-tschinkel-moving-and-ample-cones} and \cite[Cor. 1.1]{matsushita-1310}. 
}
of $\overline{\MV}_X\cap\Lambda_{\RationalNumbers}$,
where $\overline{\MV}_X$ is the closure of the movable cone in $\Lambda_\RealNumbers$.
Let $\Bir(X)$ be the group of birational self maps of $X$.
There exists a rational polyhedral cone 
\begin{equation}
\label{eq-Pi}
\Pi\subset \MV^+_X,
\end{equation}
which is a fundamental domain for the action of 
$\Bir(X)$ on $\MV^+_X$, by \cite[Theorem 6.25]{markman-torelli}.

Assume that
Conjecture \ref{conj-self-intersection-of-extremal-classes-bounded-below} holds for $X$.
Let $\Sigma\subset \Lambda$ be the set
\[
\Sigma \ := \ \left\{f_*(e) \ : \ 
\begin{array}{l}
e \in \Nef_Y^* \ \mbox{is integral, primitive, and extremal, and} \\ 
f:Y\dashrightarrow X \ \mbox{is a birational map}
\end{array}
\right\},
\]
where $Y$ is an irreducible holomorphic symplectic manifold.
The homomorphism $f_*:H^2(Y,\Integers)\rightarrow H^2(X,\Integers)$ induced by 
a birational map is a Hodge isometry, by \cite[Prop. 1.6.2]{ogrady-weight-two}.
Hence, the set $\{(e,e) \ : \ e\in \Sigma\}$ is bounded, by our assumption. 
Note that $\Sigma$ is $\Bir(X)$-invariant, by definition.

\hide{
Let $\MV^0_X$ be the interior of the movable cone.

\begin{cor} (Follows from Proposition
\ref{prop-connected-components-of-the-birational-ample-cone})
The union $F:=\bigcup_{\lambda\in\Sigma}\MV^0_X\cap\lambda^\perp$ is a closed subset of $\MV^0_X$.
\end{cor}

\begin{proof}
The action of the signed isometry group $O^+(\Lambda)$ of the lattice $\Lambda$ on the positive cone $\C$ 
is discrete, i.e., every point $x$ has an open neighborhood intersecting the orbit 
$O^+(\Lambda)x$ in the single point $\{x\}$ \cite[Sec. 2.2]{vinberg}.
It follows that the image $Mon^2_{Bir}(X)$ of $Bir(X)$ in $O^+(\Lambda)$ acts discretely on
$\MV^0_X.$ Let $p:\MV^0_X\rightarrow \MV^0_X/Bir(X)$ be the natural map onto the topological quotient space.
Then $p$ is an open map \cite[Sec. 1.2]{vinberg}. 
The intersection $\Pi\cap \MV^0_X$ is a fundamental domain for the action of $Bir(X)$ on $\MV^0_X$.
It follows that the restriction $p:[\Pi\cap \MV^0_X]\rightarrow \MV^0_X/Bir(X)$ is an open map. 
The intersection $F\cap \Pi$ is a closed subset, since the set (\ref{eq-sigma-Pi}) is finite. Hence,
The complement $F^c$ of $F$ intersects $\Pi\cap \MV^0_X$ in an open set of the latter and maps onto an
open subset of $\MV^0_X/Bir(X)$. Now $F$ is 
$Bir(X)$-invariant and so 
\[
F^c\cap \MV^0_X \ = \ p^{-1}p[F^c\cap \MV^0_X] \ = \  p^{-1}p[F^c\cap \Pi\cap \MV^0_X].
\]
It follows that $F^c\cap \MV^0_X$ is an open subset.
\end{proof}
}
Let $\MV^0_X$ be the interior of the movable cone. 
Set $F:=\bigcup_{\lambda\in\Sigma}\MV^0_X\cap\lambda^\perp$
and $\BA_X:=\MV^0_X\setminus F$.
We will refer to $\BA_X$ as the {\em birational ample cone} in view of the following proposition.
Let $Y$ be an irreducible holomorphic symplectic manifold and $f:Y\dashrightarrow X$ a birational map.

\begin{prop}
[{\cite[Prop. 17]{hassett-tschinkel-moving-and-ample-cones} and 
\cite[Prop. 3]{BHT}}]
\label{prop-connected-components-of-the-birational-ample-cone}
The image $f_*(\Amp_Y)$ is a connected component of $\BA_X$. 
Furthermore, every connected component of $\BA_X$ is of this form. 
\end{prop}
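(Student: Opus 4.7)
My plan is to prove both directions by exploiting the Hodge-isometry property of $f_*$ and the locally-finite chamber structure of $\MV^0_X$ cut out by $F$. A preliminary step is to note that, inside any rational polyhedral subcone of $\MV^+_X$, Conjecture \ref{conj-self-intersection-of-extremal-classes-bounded-below} forces only finitely many walls $\sigma^\perp$ (for $\sigma \in \Sigma$) to meet the interior, since integral primitive classes of bounded Beauville-Bogomolov-Fujiki square lying in a compact slice of such a cone are finite in number.

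For the first direction, I would show that $f_*(\Amp_Y)$ is open in $\MV^0_X$, disjoint from $F$, and has topological boundary inside $\MV^0_X$ contained in $F$. Containment in $\MV^0_X$ uses that ample classes on $Y$ are movable and that $f_*$ preserves the movable cone. Disjointness from $F$ is the statement that for each $\lambda \in \Amp_Y$ and $\sigma = g_*(e) \in \Sigma$, the pairing $(f_*(\lambda), \sigma) = ((g^{-1} \circ f)_*(\lambda), e)$ is nonzero; this is essentially the claim that the chambers of the movable cone decomposition of every birational model $Z$ are pushforwards of ample cones, so that no interior ample class pairs to zero with any primitive extremal $e \in \Nef_Z^*$. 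For the boundary claim, the boundary of $\Amp_Y$ inside $\MV^0_Y$ is a union of walls perpendicular to primitive extremal classes $e \in \Nef_Y^*$ by the cone theorem for hyperk\"ahler manifolds, and the images $f_*(e)$ lie in $\Sigma$. Thus $f_*(\Amp_Y)$ is a full connected component of $\BA_X$.

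For the converse, let $U$ be a connected component of $\BA_X$ and $\alpha \in U$. Using the fundamental domain $\Pi$ of \eqref{eq-Pi}, I may apply some $\varphi \in \Bir(X)$ so that $\varphi_*(\alpha) \in \Pi$; the connected component of $\BA_X \cap \Pi$ containing $\varphi_*(\alpha)$ is then the interior of a rational polyhedral subcone. I would realize it as $g_*(\Amp_Y)$ for some birational $g : Y \dashrightarrow X$ via a Minimal Model Program argument: contract the extremal face corresponding to a boundary wall, perform the resulting Mukai-type flop to change birational model, and iterate; Markman's Hodge-theoretic Torelli theorem identifies the output as a hyperk\"ahler birational model of $X$.

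The principal obstacle is this last realization step --- showing that every chamber of $\BA_X$ is in fact the image of an ample cone. In the $K3^{[n]}$-type and generalized Kummer cases of interest here, it is resolved by invoking the Bayer-Macr\`i and Yoshioka description of wall-crossing in moduli of Bridgeland-stable objects: the chambers of $\MV^+_X$ correspond to chambers in the stability manifold, each producing a hyperk\"ahler moduli space birational to $X$. In the abstract setting one must instead rely on the hyperk\"ahler MMP combined with Markman's Torelli theorem.
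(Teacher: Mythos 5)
Your overall architecture for the first half matches the paper's: show $f_*(\Amp_Y)$ is open in $\BA_X$, disjoint from every wall $\sigma^\perp$ with $\sigma\in\Sigma$, and closed in $\BA_X$ because its boundary inside $\MV^0_X$ is cut out by walls coming from extremal classes (this is the locally rational polyhedral structure of $f_*(\Nef_Y)$ from Hassett--Tschinkel). However, the disjointness step --- which is where the actual content lies --- is not justified in your write-up. You reduce it to the pairing $\bigl((g^{-1}\circ f)_*(\lambda),e\bigr)\neq 0$ and then assert that ``no interior ample class pairs to zero with any primitive extremal $e\in\Nef_Z^*$.'' But $(g^{-1}\circ f)_*(\lambda)$ is \emph{not} an ample class on $Z$; it is the image of an ample class of $Y$ under a parallel transport Hodge isometry to $Z$, and a priori it could lie anywhere in the positive cone. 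Your appeal to ``the chambers of the movable cone decomposition are pushforwards of ample cones'' is circular, since that is essentially the statement being proved. The missing ingredient is the result of Bayer--Hassett--Tschinkel (Prop.~3 of \cite{BHT}): for a parallel transport operator $\phi$ between birational models and an extremal curve class $e$, either $\phi(e)$ or $-\phi(e)$ is the class of an effective $1$-cycle; an effective $1$-cycle pairs nontrivially (indeed with a definite sign) with any K\"ahler class, and this is what forces $f_*(\Amp_Y)\cap g_*(e)^\perp=\emptyset$. Without this input the first half of your argument does not close.

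For the converse you take a genuinely different and much heavier route than the paper. The paper simply cites \cite[Prop.~17 and Cor.~19]{hassett-tschinkel-moving-and-ample-cones}: the union of $f_*(\Amp_Y)$, over all birational models, is a dense open subset of $\MV_X$; since each $f_*(\Amp_Y)$ is already known to be a connected component of $\BA_X$ and every component of $\BA_X$ is a nonempty open set, every component must meet --- hence equal --- one of these images. Your proposal instead runs a hyperk\"ahler MMP (contract an extremal face, flop, iterate) and identifies the output via Torelli, falling back on Bayer--Macr\`i/Yoshioka wall-crossing in the moduli-space case. As sketched this has real gaps in the abstract setting: you need existence and termination of the relevant flops and the fact that each output is again a smooth irreducible holomorphic symplectic manifold, none of which is free; and the Bridgeland-theoretic fallback only covers moduli spaces, not arbitrary manifolds of $K3^{[n]}$ or Kummer type. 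The density statement of Hassett--Tschinkel does this work for you and is the intended shortcut.
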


\begin{proof}
An isomorphism $\psi:H^2(Y,\Integers)\rightarrow H^2(X,\Integers)$ is said to be a
{\em parallel transport operator}, if there exists a smooth and proper family
$\pi:\X\rightarrow B$ over an analytic space $B$, with K\"{a}hler fibers, points
$b_1, b_2\in B$, isomorphisms
$Y\cong \X_{b_1}$ and $X\cong \X_{b_2}$, and a continuous path $\gamma$ from $b_1$ to $b_2$,
such that parallel transport along $\gamma$ in the local system $R^2\pi_*\Integers$ induces the
homomorphism $\psi$.

The Hodge isometry $f_*:H^2(Y,\Integers)\rightarrow H^2(X,\Integers)$ is a
parallel transport operator, by work of Huybrechts \cite[Cor. 2.7]{huybrechts-kahler-cone} 
(see also \cite[Theorem 3.1]{markman-torelli}).
Let $Z$ be an irreducible holomorphic symplectic manifold and 
$g:Z\dashrightarrow X$
a birational map. The composition $\phi:=f_*^{-1}\circ g_*:H^2(Z,\Integers)\rightarrow H^2(Y,\Integers)$
is thus a parallel transport operator. All extremal rays of $\Nef_X^*$ are generated by classes of rational curves, by the Cone Theorem \cite[Prop. 11]{hassett-tschinkel-moving-and-ample-cones}.
Let $e\in \Nef_Z^*$ be the class of a rational curve generating  an extremal ray.
Then, either $\phi(e)$ or $-\phi(e)$ is the class of an effective $1$-cycle, by \cite[Prop. 3]{BHT}. 
We conclude that the intersection $f_*(\Amp_Y)\cap g_*(e)^\perp$ is empty. 
Hence, $f_*(\Amp_Y)$ is contained in $\BA_X$.

Clearly, $f_*(\Amp_Y)$ is an open subset of $\BA_X$ and
$f_*(\Nef_Y)$ is a closed subset of $\Lambda_\RealNumbers$. 
The cone $f_*(\Nef_Y)$ is locally rational polyhedral in a neighborhood of any class of $\MV^0_X$, 
by \cite[Prop. 17 and Cor. 19]{hassett-tschinkel-moving-and-ample-cones}. Consequently, 
we get the equality $f_*(\Amp_Y)=f_*(\Nef_Y)\cap \BA_X$, by definition of $\BA_X$. 
Hence, $f_*(\Amp_Y)$ is an open and closed subset of $\BA_X$.

The union of $f_*(\Amp_Y)$, as $f$ varies over all birational maps from all irreducible holomorphic symplectic manifolds birational to $X$, is a dense open subset of $\MV_X$, 
by   \cite[Prop. 17 and Cor. 19]{hassett-tschinkel-moving-and-ample-cones}.
Hence, every connected component
of $\BA_X$ has the form in the statement.
\end{proof}

Let $N$ be a positive integer. Let $\Pi\subset \overline{\C}$ be a rational polyhedral cone.
The following elementary statement will be proven in section \ref{sec-set-of-intersectiong-walls-is-finite}
Proposition \ref{prop-set-of-intersectiong-walls-is-finite-v2}.

\begin{prop}
\label{prop-set-of-intersectiong-walls-is-finite}
The set 
$\{\lambda\in \Lambda \ : (\lambda,\lambda)>-N \ \mbox{and} \ \lambda^\perp\cap\Pi\cap \C\neq \emptyset\}$
is finite.
\end{prop}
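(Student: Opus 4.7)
The plan is to combine the Hodge index theorem with a cusp analysis of the rational polyhedral cone $\Pi$. First, I reduce to the case that $\Pi$ spans $\Lambda_{\RealNumbers}$: if $V:=\mathrm{span}(\Pi)$ is a proper subspace then $V$ contains a positive class, so $\Lambda_{\RealNumbers}=V\oplus V^{\perp}$ orthogonally with $V^{\perp}$ negative definite, and after passing to a finite-index sublattice of $\Lambda$ respecting this decomposition, the condition $\lambda^{\perp}\cap\Pi\cap\C\ne\emptyset$ depends only on the $V$-component of $\lambda$, while its $V^{\perp}$-component is forced into a finite set by $(\lambda,\lambda)>-N$. Observe also that any $\lambda$ with $\lambda^{\perp}\cap\C\ne\emptyset$ satisfies $(\lambda,\lambda)\le 0$ by Hodge index, with equality only for $\lambda=0$; so nontrivial $\lambda$ lie in the slab $-N<(\lambda,\lambda)<0$. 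Now, assuming $\Pi$ spans $\Lambda_{\RealNumbers}$, let $\pi_{1},\ldots,\pi_{r}\in\Lambda$ be the primitive isotropic generators of the extremal rays of $\Pi$ (the \emph{cusps}), and for a small $\epsilon>0$ form the disjoint cusp neighborhoods $U_{i,\epsilon}:=\{x\in\Pi:(x,x)=1,\,(\pi_{i},x)<\epsilon\}$ on the hyperboloid $\{(x,x)=1\}$; the complement $\Omega_{\epsilon}$ in $\{x\in\Pi\cap\C:(x,x)=1\}$ is compact.

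For each $\lambda$ in the set, the intersection $\lambda^{\perp}\cap\Pi\cap\C$ rescaled to the hyperboloid is connected (it is a convex cone intersected with a level set of the form), so it either meets $\Omega_{\epsilon}$ or lies entirely in a single $U_{i,\epsilon}$. If it meets $\Omega_{\epsilon}$, finiteness follows by compactness: an infinite sequence $\lambda_{n}$ with $x_{n}\in\lambda_{n}^{\perp}\cap\Omega_{\epsilon}$ would, after extracting $x_{n}\to x_{\infty}\in\Omega_{\epsilon}\subset\C$ and normalizing $\lambda_{n}/\|\lambda_{n}\|_{+}\to\lambda_{\infty}$ in a positive-definite norm (with $\|\lambda_{n}\|_{+}\to\infty$ since distinct integer vectors of bounded self-intersection have unbounded positive-definite norm), produce a nonzero isotropic $\lambda_{\infty}$ with $(\lambda_{\infty},x_{\infty})=0$, contradicting the Lorentzian reverse Cauchy--Schwarz inequality. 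If instead $\lambda^{\perp}$ meets only $U_{i,\epsilon}$, pick $x\in\lambda^{\perp}\cap U_{i,\epsilon}$ and apply Cauchy--Schwarz in the negative-definite subspace $x^{\perp}$:
\[
|(\lambda,\pi_{i})|^{2}\le-(\lambda,\lambda)\cdot\bigl[(\pi_{i},x)^{2}-(\pi_{i},\pi_{i})\bigr]<N\epsilon^{2},
\]
so for $\epsilon<1/\sqrt{N}$ the integer $(\lambda,\pi_{i})$ vanishes and $\lambda\in\pi_{i}^{\perp}$.

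Finally, I perform the cusp descent. The form on $\pi_{i}^{\perp}$ has kernel $\RealNumbers\pi_{i}$ with negative-definite quotient $\pi_{i}^{\perp}/\RealNumbers\pi_{i}$ of rank $\rho-2$, so integer vectors in $\pi_{i}^{\perp}$ with $(\cdot,\cdot)>-N$ fall into finitely many residue classes $\mu$ modulo $\pi_{i}$. For each residue $\mu$, the lifts $\lambda=\mu+c\pi_{i}$ with $c\in\Integers$ satisfying $(\mu+c\pi_{i})^{\perp}\cap\Pi\cap\C\ne\emptyset$ correspond to integer values of
\[
c=-\,\frac{(\mu,x)}{(\pi_{i},x)}=-\,\frac{\sum_{j\ne i}t_{j}(\mu,\pi_{j})}{\sum_{j\ne i}t_{j}(\pi_{i},\pi_{j})}
\]
for some $x=\sum_{j}t_{j}\pi_{j}\in\Pi\cap\C$ (the simplification uses $(\mu,\pi_{i})=(\pi_{i},\pi_{i})=0$). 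Since $(\pi_{i},\pi_{j})>0$ for $j\ne i$ (by the Lorentzian reverse Cauchy--Schwarz inequality applied to the isotropic $\pi_{i}$ and the non-proportional $\pi_{j}\in\overline{\C}\setminus\{0\}$), the denominator is bounded below by a positive constant on the normalized simplex $\{\sum_{j\ne i}t_{j}=1\}$, so the ratio ranges over a bounded set, yielding finitely many admissible integer $c$. The main obstacle is this cusp descent, whose key input is the uniform bound on the ratio $-(\mu,x)/(\pi_{i},x)$; this bound hinges crucially on the positivity of the pairings $(\pi_{i},\pi_{j})$ and the rational polyhedral structure of $\Pi$.
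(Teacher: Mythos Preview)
Your argument is correct and takes a genuinely different route from the paper's proof. The paper reduces to the two-dimensional edges $\Pi_{ij}=\RealNumbers_{\ge 0}x_i+\RealNumbers_{\ge 0}x_j$ of $\Pi$: it shows (Proposition~\ref{prop-set-of-intersectiong-walls-is-finite-v2}) that any wall $v^\perp$ meeting $\Pi\cap\C$ must meet some edge $\Pi_{ij}\cap\C$, and then handles each edge by an explicit coordinate computation, with two separate lemmas depending on whether the $2$-plane spanned by $x_i,x_j$ contains a rational isotropic vector (Lemma~\ref{lem:P-finite}) or not (Lemma~\ref{lem:P-finite2}). Your approach instead views $\{x\in\Pi:(x,x)=1\}$ as a finite-volume hyperbolic polytope, treats the compact core by a limiting argument (an accumulation would produce an isotropic class orthogonal to an interior point of $\C$), and handles each cusp by forcing $\lambda$ into $\pi_i^\perp$ and then bounding the $\pi_i$-coordinate of $\lambda$ via the rational polyhedral structure. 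The paper's approach is more hands-on and entirely elementary; yours is more geometric and makes the role of the ideal vertices transparent, and would adapt more directly to situations where one already has a thick--thin decomposition.

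One notational slip to fix: in the cusp descent you write $x=\sum_j t_j\pi_j$, but you defined $\pi_1,\dots,\pi_r$ to be only the \emph{isotropic} extremal generators of $\Pi$. In general $\Pi$ may have non-isotropic extremal rays as well, so $x\in\Pi$ need not lie in their span. The remedy is harmless: let $v_1,\dots,v_m$ denote \emph{all} extremal generators of $\Pi$ and write $x=\sum_j t_j v_j$. Since $(\mu,\pi_i)=(\pi_i,\pi_i)=0$, both numerator and denominator in $c=-(\mu,x)/(\pi_i,x)$ involve only the coefficients $t_j$ with $v_j\ne\pi_i$, and the positivity $(\pi_i,v_j)>0$ for every $v_j\ne\pi_i$ (isotropic or not) still follows from the Lorentzian reverse Cauchy--Schwarz inequality, so your bound on $c$ goes through unchanged.
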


We conclude that the set
\begin{equation}
\label{eq-sigma-Pi}
\{\lambda\in \Sigma \ : \ \lambda^\perp\cap\Pi\cap \C\neq \emptyset\}
\end{equation}
is finite, by the above proposition and the assumption that Conjecture 
\ref{conj-self-intersection-of-extremal-classes-bounded-below} holds for $X$.

The set (\ref{eq-sigma-Pi}) divides the fundamental domain $\Pi$, given in (\ref{eq-Pi}), 
into a finite union of closed rational polyhedral subcones
\begin{equation}
\label{eq-subcone}
\Pi_i, \ \ i\in I,
\end{equation}
each with a non-empty interior.

Let $\Pi_i$ be one of the  subcones in (\ref{eq-subcone}). Let $f:Y\dashrightarrow X$ be a birational map
as above.
\begin{lem}
\label{lemma-if-translate-itersects-Nef-cone-then-it-is-contained-in-it}
If $g$ is an element of $\Bir(X)$, such that $g(\Pi_i)$ intersects the interior 
$f_*(\Amp_Y)$ of 
$f_*(\Nef_Y)$, then $g(\Pi_i)=f_*(\Nef_Y)\cap g(\Pi)$.
\end{lem}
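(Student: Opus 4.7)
The plan is to exploit the $\Bir(X)$-equivariance of the chamber decomposition and then carry out a local chamber argument inside $g(\Pi)$. First, I would verify that $\Sigma$ is $\Bir(X)$-invariant: for $h \in \Bir(X)$ and $e = f_{0,*}(e_0) \in \Sigma$ coming from a birational $f_0 : Y_0 \dashrightarrow X$ and a primitive extremal $e_0 \in \Nef_{Y_0}^*$, the class $h_*(e)$ arises from the birational map $h \circ f_0$, hence $h_*(e) \in \Sigma$. Combined with Proposition~\ref{prop-set-of-intersectiong-walls-is-finite}, it follows that the set of walls $\mu^\perp$ with $\mu \in \Sigma$ meeting $g(\Pi) \cap \C$ is exactly the $g$-image of the finite set (\ref{eq-sigma-Pi}), so the decomposition of $g(\Pi)$ by this hyperplane arrangement is precisely $\{g(\Pi_i) : i \in I\}$.

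I would then establish $g(\Pi_i) \subseteq f_*(\Nef_Y) \cap g(\Pi)$. Pick $x_0 \in g(\Pi_i) \cap f_*(\Amp_Y)$; since $f_*(\Amp_Y)$ is open and $g(\Pi_i)$ is a convex cone with non-empty interior, we may arrange $x_0 \in \operatorname{int}(g(\Pi_i))$. For any $y \in \operatorname{int}(g(\Pi_i)) \cap \MV^0_X$, the segment from $x_0$ to $y$ lies in the convex set $\operatorname{int}(g(\Pi_i)) \cap \MV^0_X$, and avoids every wall $\mu^\perp$ with $\mu \in \Sigma$: such a wall crossing this segment would meet $g(\Pi) \cap \C$ and hence belong to the finite arrangement, but those walls by construction bound the chambers $g(\Pi_j)$ rather than cross their interiors. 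Thus the segment lies in $\BA_X$, and the disjoint decomposition of $\BA_X$ in Proposition~\ref{prop-connected-components-of-the-birational-ample-cone} forces it to stay in the connected component $f_*(\Amp_Y)$. Hence $\operatorname{int}(g(\Pi_i)) \cap \MV^0_X \subseteq f_*(\Amp_Y)$; since this subset is dense in $g(\Pi_i)$ (using $g(\Pi_i) \subset \MV^+_X \subset \overline{\MV^0_X}$ together with the fact that the interior of a convex cone is dense in it), closedness of $f_*(\Nef_Y)$ yields $g(\Pi_i) \subseteq f_*(\Nef_Y)$.

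For the reverse inclusion, I would take $z \in f_*(\Amp_Y) \cap g(\Pi)$ and show $z \in g(\Pi_i)$; density of $f_*(\Amp_Y)$ in $f_*(\Nef_Y) = \overline{f_*(\Amp_Y)}$ together with closedness of $g(\Pi_i)$ then promotes the conclusion to all of $f_*(\Nef_Y) \cap g(\Pi)$. Such a $z$ lies in some chamber $g(\Pi_j)$. If $j \neq i$, then $g(\Pi_i)$ and $g(\Pi_j)$ are separated by a wall $\mu^\perp$ of the arrangement with $\mu \in \Sigma$; but $f_*(\Amp_Y)$ is a connected open subset of $\MV^0_X \setminus F$, disjoint from $\mu^\perp$, so it cannot contain points on both sides of $\mu^\perp$. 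This contradicts that $f_*(\Amp_Y)$ meets both $\operatorname{int}(g(\Pi_i))$ (by hypothesis, via Step~2) and $\operatorname{int}(g(\Pi_j))$ (via $z$). Hence $j = i$ and $z \in g(\Pi_i)$.

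The main obstacle is making the chamber-crossing arguments precise: both the claim that a segment in $\operatorname{int}(g(\Pi_i)) \cap \MV^0_X$ meets no wall $\mu^\perp$ with $\mu \in \Sigma$, and the claim that no two distinct chambers $g(\Pi_i), g(\Pi_j)$ both meet the connected chamber $f_*(\Amp_Y)$. Both reduce to the elementary observation that a connected subset of the complement of a hyperplane lies on a single side of that hyperplane, applied to the finitely many walls $\mu^\perp$, $\mu \in \Sigma$, that intersect $g(\Pi) \cap \C$—a finiteness guaranteed by Proposition~\ref{prop-set-of-intersectiong-walls-is-finite} and the assumption of Conjecture~\ref{conj-self-intersection-of-extremal-classes-bounded-below}.
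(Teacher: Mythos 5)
Your argument is correct and is essentially the paper's proof: both directions rest on the same two facts, namely that $f_*(\Amp_Y)$ is convex, hence connected, and disjoint from every wall $\lambda^\perp$, $\lambda\in\Sigma$ (Proposition \ref{prop-connected-components-of-the-birational-ample-cone}), while the chambers $g(\Pi_i)$ are exactly the pieces cut out of $g(\Pi)$ by the finitely many such walls, so a connected wall-avoiding set meeting $\operatorname{int}(g(\Pi_i))$ cannot leave it, and conversely $\operatorname{int}(g(\Pi_i))$ lies in a single component of $\BA_X$. The only cosmetic difference is that the paper conjugates by $g^{-1}$ and works inside $\Pi$ with $h:=g^{-1}\circ f$, whereas you work directly in $g(\Pi)$ after noting the $\Bir(X)$-invariance of $\Sigma$.
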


\begin{proof}
Set $h:=g^{-1}\circ f$ and assume that the interior of $h_*(\Nef_Y)\cap\Pi_i$ is non-empty. 
We need to prove the equality $\Pi_i=h_*(\Nef_Y)\cap \Pi$. Denote by 
$\Pi_i^0$ the interior of $\Pi_i$. It suffices to prove the equality 
\[
\Pi_i^0=h_*(\Amp_Y)\cap \Pi^0.
\]
The intersection $h_*(\Amp_Y)\cap \Pi^0$ is connected, since both 
$h_*(\Amp_Y)$ and $\Pi^0$ are convex cones. Furthermore, the latter intersection is
disjoint from the hyperplane $\lambda^\perp$, for every $\lambda\in\Sigma$, 
by Proposition \ref{prop-connected-components-of-the-birational-ample-cone}. 
Hence, $h_*(\Amp_Y)\cap \Pi^0$ is contained in $\Pi_i^0$.
The inclusion $\Pi_i^0\subset h_*(\Amp_Y)\cap \Pi^0$ follows immediately from 
Proposition \ref{prop-connected-components-of-the-birational-ample-cone}. 
\end{proof}

Let
\[
I_f
\] 
be the subset of $I$, consisting of indices $i$ admitting an element
$g_i\in \Bir(X)$, such that $g_i(\Pi_i)$ is contained in $f_*(\Nef_Y)$. 
The set $I_f$ is non-empty, since $\Pi$ is a fundamental domain for the $\Bir(X)$-action on $\MV^+_X$
and the interior of $f_*(\Nef_Y)$ is contained in $\MV^+_X$. 

Let $f_j:Y_j\dashrightarrow X$ be a birational map, with $Y_j$ an irreducible holomorphic symplectic manifold,
$j=1,2$.

\begin{lem}
\label{lem}
\begin{enumerate}
\item
\label{iff}
$Y_1$ is isomorphic to $Y_2$, if and only if $I_{f_1}=I_{f_2}$.
\item
\label{partition-of-I}
If the intersection $I_{f_1}\cap I_{f_2}$ is nonempty, then $I_{f_1}=I_{f_2}$.
\end{enumerate}
\end{lem}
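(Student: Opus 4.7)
My plan is to reduce both parts of the lemma to a single intermediate claim: if $I_{f_1} \cap I_{f_2} \neq \emptyset$, then there exists $h \in \Bir(X)$ with $h(f_{1,*}(\Nef_{Y_1})) = f_{2,*}(\Nef_{Y_2})$. Granting this claim, part (\ref{partition-of-I}) follows immediately by transporting any subcone witnessing $i \in I_{f_1}$ through $h$ to witness $i \in I_{f_2}$ (and then symmetrizing), while the reverse implication in (\ref{iff}) follows by combining the claim with the hyperk\"ahler Torelli theorem.

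To establish the intermediate claim, pick $i_0 \in I_{f_1} \cap I_{f_2}$ with witnesses $g_j \in \Bir(X)$ satisfying $g_j(\Pi_{i_0}) \subset f_{j,*}(\Nef_{Y_j})$, and set $h := g_2 \circ g_1^{-1}$. Since every $\Pi_i$ in \eqref{eq-subcone} has nonempty interior, so does the cone $g_2(\Pi_{i_0}) = h(g_1(\Pi_{i_0}))$, and its interior lies simultaneously in $f_{2,*}(\Amp_{Y_2})$ and in $h(f_{1,*}(\Amp_{Y_1}))$. Both of the latter are connected components of $\BA_X$ by Proposition \ref{prop-connected-components-of-the-birational-ample-cone}; sharing an interior point forces them to coincide, and taking closures gives the claim.

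The forward direction in part (\ref{iff}) is even more direct: an isomorphism $\psi : Y_1 \simto Y_2$ furnishes $h' := (f_2 \circ \psi) \circ f_1^{-1} \in \Bir(X)$ with $h'(f_{1,*}(\Nef_{Y_1})) = f_{2,*}(\Nef_{Y_2})$ by functoriality of pushforward, and the transport argument used for (\ref{partition-of-I}) yields $I_{f_1} = I_{f_2}$. For the reverse direction, $I_{f_1} = I_{f_2}$ combined with the nonemptiness of both sets (observed just before the lemma) gives $I_{f_1} \cap I_{f_2} \neq \emptyset$, so the claim produces $h$; the composition $\phi := f_2^{-1} \circ h \circ f_1 : Y_1 \dashrightarrow Y_2$ is then a birational map whose induced Hodge isometry $\phi_* = f_{2,*}^{-1} \circ h_* \circ f_{1,*}$ carries $\Amp_{Y_1}$ onto $\Amp_{Y_2}$. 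In particular any ample class of $Y_1$ is sent to a K\"ahler class of $Y_2$, so the Torelli-type statement in \cite[Cor. 2.7]{huybrechts-kahler-cone} guarantees that $\phi$ extends to a biregular isomorphism $Y_1 \cong Y_2$.

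The main delicate point I anticipate is the step in the intermediate claim where the two connected components of $\BA_X$ are shown to coincide rather than merely to share a boundary wall: the argument depends on the nonemptiness of the interior of each $\Pi_i$ in \eqref{eq-subcone}, which in turn rests on the fact that no hyperplane $\lambda^\perp$ with $\lambda \in \Sigma$ meeting $\Pi$ is allowed to subdivide any $\Pi_i$. Beyond this, the proof is largely bookkeeping; the one external ingredient is the standard Torelli-type input cited at the end.
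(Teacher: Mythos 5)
Your argument is correct and follows essentially the same route as the paper's: both reduce to a common subcone $\Pi_{i_0}$ with witnesses in $\Bir(X)$, and conclude that the resulting birational map $Y_1\dashrightarrow Y_2$ is biregular because it carries an ample class to an ample class. The only (harmless) difference is that your intermediate claim detours through Proposition \ref{prop-connected-components-of-the-birational-ample-cone} to identify the full nef cones first, whereas the paper simply evaluates the composite of the witnesses on a single class in the interior of $\Pi_{i_0}$.
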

\begin{proof}
(\ref{iff}) 
Let $\phi:Y_1\rightarrow Y_2$ be an isomorphism. 
Let $g$ be an element of $\Bir(X)$ and $\Pi_i$ a subcone 
of $\Pi$, among those given in (\ref{eq-subcone}). 
Set $\psi:=f_2\phi f_1^{-1}$. Then
$\psi(f_1(\Nef_{Y_1}))=f_2(\Nef_{Y_2})$. 
Thus, $g(\Pi_i)$ is contained in $f_1(\Nef_{Y_1})$, 
if and only if
$(\psi g)(\Pi_i)$ is contained in $f_2(\Nef_{Y_2})$. Consequently, $\Pi_i$ belongs to $I_{f_1}$,
if and only if it belongs to $I_{f_2}$.

Assume that $I_{f_1}=I_{f_2}$. Then there exists a subcone 
$\Pi_i$ of $\Pi$, among those given in (\ref{eq-subcone}), and elements $h_j$ in $\Bir(X)$, $j=1,2$, such that $h_j(\Pi_i)$
is contained in $f_j(\Nef_{Y_j})$.
Then $f_j^{-1}h_j$ maps $\Pi_i$ into $\Nef_{Y_j}$, and so 
$f_2^{-1}h_2h_1^{-1}f_1:Y_1\rightarrow Y_2$ maps some ample class to an ample class, and is thus an isomorphism. 

(\ref{partition-of-I}) Follows from the proof of part (\ref{iff}).
\end{proof}

Given an irreducible holomorphic symplectic manifold $Y$ birational to $X$,
set
\[
I_Y:=I_f,
\]
where $f:Y\dasharrow X$ is a birational map. $I_Y$ is independent of the choice of $f$, by Lemma \ref{lem}.

\begin{cor}
\label{cor-finitely-many-isomorphism-classes}
The set ${\mathfrak B}_X$, of isomorphism classes of irreducible holomorphic symplectic manifolds 
in the birational class of $X$, is finite.
\end{cor}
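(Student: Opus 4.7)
The plan is to deduce the corollary essentially by a counting argument, using the assignment $Y \mapsto I_Y$ constructed just above the statement together with the finiteness of the index set $I$. First I would note that $I$ is finite by construction: it indexes the rational polyhedral subcones $\Pi_i$ in the decomposition of the fundamental domain $\Pi$ cut out by the walls $\lambda^\perp$ for $\lambda$ in the set \eqref{eq-sigma-Pi}, and that set is finite by Proposition \ref{prop-set-of-intersectiong-walls-is-finite} combined with the bound on $\{(e,e):e\in\Sigma\}$ coming from Conjecture \ref{conj-self-intersection-of-extremal-classes-bounded-below}.

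Next, for every $Y\in{\mathfrak B}_X$ with birational map $f:Y\dashrightarrow X$, the subset $I_Y\subseteq I$ is well-defined (independent of the choice of $f$) by Lemma \ref{lem}, and it is nonempty. Nonemptiness was already recorded in the text: since $\Pi$ is a fundamental domain for the $\Bir(X)$-action on $\MV^+_X$, and since the open set $f_*(\Amp_Y)$ is contained in $\MV^+_X$, some $\Bir(X)$-translate of at least one $\Pi_i$ must fit inside $f_*(\Nef_Y)$.

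The key step is then to invoke Lemma \ref{lem} to see that the correspondence $Y\mapsto I_Y$ is an injection from ${\mathfrak B}_X$ into the set of nonempty subsets of $I$, whose images are moreover pairwise disjoint. Injectivity is exactly Lemma \ref{lem}(\ref{iff}), and disjointness of distinct $I_{Y_1}, I_{Y_2}$ is Lemma \ref{lem}(\ref{partition-of-I}): if they met, part (\ref{partition-of-I}) would force $I_{Y_1}=I_{Y_2}$, and then part (\ref{iff}) would give $Y_1\cong Y_2$. Since $I$ is a finite set, at most $|I|$ pairwise disjoint nonempty subsets can be extracted from it, so $|{\mathfrak B}_X|\le |I|<\infty$.

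There is no real obstacle here beyond what has already been done. All of the substantive geometric content has been absorbed into three inputs: the Markman fundamental domain $\Pi$ for $\Bir(X)$ on $\MV^+_X$, the finiteness of the wall-set \eqref{eq-sigma-Pi} (which is where the cone conjecture hypothesis enters), and the rigidity Lemma \ref{lem}. Given these, the corollary is a purely combinatorial consequence of the injection ${\mathfrak B}_X\hookrightarrow 2^I$ with disjoint nonempty images.
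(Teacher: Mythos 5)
Your argument is correct and is essentially the paper's own proof: the authors likewise observe that $I$ is finite and that Lemma \ref{lem} makes $Y\mapsto I_Y$ a bijection onto the blocks of a partition of $I$, which is exactly your injection with pairwise disjoint nonempty images. You have merely spelled out the nonemptiness and disjointness steps that the paper leaves implicit.
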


\begin{proof}
The set $I$ is finite. The map $Y\mapsto I_Y$ induces a bijection between 
the set ${\mathfrak B}_X$ and subsets in the partition $I=\bigcup_{Y\in {\mathfrak B}_X} I_Y$
of $I$, by Lemma
\ref{lem}.
\end{proof}

\begin{lem}
\label{lemma-right-coset}
Given $i\in I$, the set $\{g\in \Bir(X) \ : \ g(\Pi_i)\subset \Nef_X\}$ is a left $\Aut(X)$-coset.
\end{lem}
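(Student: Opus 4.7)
Write $S$ for the set $\{g\in \Bir(X) : g(\Pi_i)\subset \Nef_X\}$. The plan is to show that when $S$ is nonempty, fixing any $g_0\in S$ gives the equality $S=\Aut(X)\cdot g_0$.

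The inclusion $\Aut(X)\cdot g_0\subset S$ is immediate: any $\alpha\in\Aut(X)$ preserves $\Amp_X$ and hence its closure $\Nef_X$, so $(\alpha g_0)(\Pi_i)=\alpha(g_0(\Pi_i))\subset\Nef_X$.

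For the reverse inclusion I would take an arbitrary $g\in S$ and set $\phi:=g\circ g_0^{-1}\in\Bir(X)$, the goal being to show $\phi\in\Aut(X)$. The subcone $\Pi_i$ has nonempty interior $\Pi_i^0$ in $\Lambda_\RealNumbers$ by the construction following \eqref{eq-subcone}, and $g_0$ acts as a linear automorphism of $\Lambda_\RealNumbers$. Consequently $g_0(\Pi_i^0)$ is an open subset of $\Lambda_\RealNumbers$; being contained in $\Nef_X$, it must in fact lie in the topological interior $\Amp_X$ of $\Nef_X$. The same reasoning gives $g(\Pi_i^0)\subset\Amp_X$. Thus $\phi(\Amp_X)$ contains the nonempty set $\phi(g_0(\Pi_i^0))=g(\Pi_i^0)$, so $\phi(\Amp_X)\cap\Amp_X\neq\emptyset$. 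Proposition \ref{prop-connected-components-of-the-birational-ample-cone}, applied to the identity and to $\phi$, shows that both $\Amp_X$ and $\phi(\Amp_X)$ are connected components of the birational ample cone $\BA_X$; since they meet, they must coincide. Therefore $\phi$ sends some ample class to an ample class, which forces the birational self-map $\phi$ to extend to a biregular automorphism of $X$, exactly as in the closing sentence of the proof of Lemma \ref{lem}. Hence $\phi\in\Aut(X)$ and $g=\phi\cdot g_0\in\Aut(X)\cdot g_0$.

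The main obstacle is the last step, namely the assertion that a birational self-map of $X$ carrying some ample class to an ample class is already an automorphism. This is a standard feature of the birational geometry of projective irreducible holomorphic symplectic manifolds (the relevant walls of the nef cone correspond to flopping contractions rather than divisorial ones) and has already been invoked in the proof of Lemma \ref{lem}.
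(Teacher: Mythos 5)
Your argument is correct and is essentially the paper's own proof: both reduce to the observation that a point $\alpha$ in the interior of $\Pi_i$ is carried by any element of the set into the interior $\Amp_X$ of $\Nef_X$, so that for two such elements $g,h$ the composite $gh^{-1}$ sends the ample class $h(\alpha)$ to an ample class and is therefore an automorphism. Your detour through Proposition \ref{prop-connected-components-of-the-birational-ample-cone} is unnecessary (once $g(\Pi_i^0)$ and $g_0(\Pi_i^0)$ are known to lie in $\Amp_X$ you can conclude directly) but harmless.
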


\begin{proof}
Assume that $g(\Pi_i)$ and $h(\Pi_i)$ are both contained in $\Nef_X$ and let $\alpha$ be a class
in the interior of $\Pi_i$. Then the classes $g(\alpha)$ and $h(\alpha)$ are ample
and $gh^{-1}$ maps the ample class $h(\alpha)$ to an ample class and is thus an automorphism.
\end{proof}

Choose an element $g_i$ in the left $\Aut(X)$-coset associated to $\Pi_i$ in
Lemma \ref{lemma-right-coset}, for each $i\in I_X$.
Let $\Nef_X^+$ be the convex hull of $\Nef_X\cap\Lambda_\RationalNumbers$.

\begin{cor} 
\label{cor-nef-cone-thm}
$\Nef_X^+$ is the union of  $\Aut(X)$-translates of finitely many rational polyhedral 
subcones $g_i(\Pi_i)$, $i\in I_X$, of $\Nef_X^+$.
\end{cor}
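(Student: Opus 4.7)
The plan is to verify the two inclusions in the equality
\[
\Nef_X^+ \ = \ \bigcup_{i\in I_X,\ \sigma\in\Aut(X)} \sigma(g_i(\Pi_i)).
\]
The inclusion $\supset$ is immediate from the setup: for $i\in I_X$ the cone $g_i(\Pi_i)$ is a rational polyhedral subcone of $\Nef_X$, hence spanned by integral nef classes and therefore contained in $\Nef_X^+$; and $\Aut(X)$ preserves $\Nef_X^+$ since it preserves $\Nef_X\cap\Lambda_\RationalNumbers$. The real content is the opposite inclusion, which I would establish in two stages: first cover the open ample cone $\Amp_X$, then extend by a limit argument to the full $\Nef_X^+$.

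For the first stage, take $\alpha\in\Amp_X$. Since $\Amp_X\subset\MV^+_X$ and $\Pi$ is a fundamental domain for $\Bir(X)$ on $\MV^+_X$, write $\alpha=h(\beta)$ with $h\in\Bir(X)$ and $\beta\in\Pi$. Because $\alpha$ is ample it lies in $\BA_X$, so $\beta$ avoids every wall $\lambda^\perp$ with $\lambda\in\Sigma$ meeting $\Pi$, and hence $\beta$ lies in the interior $\Pi_j^0$ of a unique chamber $\Pi_j$. Thus $h(\Pi_j)$ meets $\Amp_X$; Lemma \ref{lemma-if-translate-itersects-Nef-cone-then-it-is-contained-in-it} applied with $f=\id_X$ gives $h(\Pi_j)\subset\Nef_X$, which means $j\in I_X$. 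Comparing $h$ with the chosen representative $g_j$ in the left $\Aut(X)$-coset of Lemma \ref{lemma-right-coset}, both $h$ and $g_j$ send $\Pi_j^0$ into $\Amp_X$, so $hg_j^{-1}$ sends an ample class to an ample class and is an automorphism $\sigma\in\Aut(X)$; hence $\alpha\in h(\Pi_j)=\sigma(g_j(\Pi_j))$.

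For the second stage, let $\alpha\in\Nef_X^+$ and fix a rational class $\beta\in\Amp_X$. Since $\Nef_X^+\subset\Nef_X$, the classes $\alpha_t:=\alpha+t\beta$ are ample for every $t>0$, so by the first stage $\alpha_t=\sigma_t(g_{i(t)}(\gamma_t))$ for some $\sigma_t\in\Aut(X)$, $i(t)\in I_X$, and $\gamma_t\in\Pi_{i(t)}$. The finiteness of $I_X$ lets me pick a sequence $t_n\to 0^+$ along which $i(t_n)=i$ is constant. The remaining step is to extract a further subsequence along which $\sigma_{t_n}$ stabilises, so that $\alpha=\lim\alpha_{t_n}\in\sigma(g_i(\Pi_i))$ by closedness of the latter cone. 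The tool I would invoke is the discreteness of the image of $\Aut(X)$ in $\LieO(\Lambda_\RealNumbers)$, which makes the $\Aut(X)$-translates of the fixed rational polyhedral cone $g_i(\Pi_i)$ a locally finite family in the positive cone $\C$. The principal obstacle I anticipate is the isotropic boundary, namely classes $\alpha$ with $(\alpha,\alpha)=0$: on the null cone the $\Aut(X)$-action is not discrete, so the local finiteness argument breaks down there and a separate argument is needed to place such $\alpha$ inside a closure $\sigma(g_i(\Pi_i))$, probably by exhibiting the isotropic ray as an edge of some translate using the combinatorial decomposition already established on $\Pi$.
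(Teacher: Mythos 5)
Your first stage (covering $\Amp_X$) is sound and is essentially the same mechanism the paper uses: a translate $h(\Pi_j)$ meeting $\Amp_X$ is forced into $\Nef_X$ by Lemma \ref{lemma-if-translate-itersects-Nef-cone-then-it-is-contained-in-it} with $f=\id$, and Lemma \ref{lemma-right-coset} converts $h$ into $\sigma g_j$ with $\sigma\in\Aut(X)$. (A cosmetic point: $\beta=h^{-1}(\alpha)$ need not lie in the \emph{interior} of a chamber $\Pi_j$ --- it may sit on the boundary of $\Pi$ itself --- but it lies in some $\Pi_j$, which is all you use.)

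The genuine gap is exactly the one you flag at the end of your second stage, and it is not a minor loose end: the classes of $\Nef_X^+$ lying on the isotropic boundary $\partial\C$ (e.g.\ rational isotropic nef classes, which occur whenever $X$ carries a Lagrangian fibration) are precisely the points the corollary must reach for Lemma \ref{lemma-totaro} to apply, and your limit argument genuinely fails there. Proper discontinuity of the isometry group holds only on the open positive cone $\C$, so the family of translates $\sigma(g_i(\Pi_i))$ need not be locally finite near an isotropic ray, the automorphisms $\sigma_{t_n}$ need not stabilise along any subsequence, and ``exhibiting the isotropic ray as an edge of some translate'' is an assertion, not an argument. The paper avoids the limit process altogether: it starts from the inclusion $\Nef_X^+\subset \MV_X^+$ together with the statement of \cite[Theorem 6.25]{markman-torelli}, which asserts that the rational hull $\MV_X^+$ --- not merely the open cone $\MV_X^0$ --- is covered by the \emph{closed} translates $g(\Pi)=\bigcup_i g(\Pi_i)$, $g\in\Bir(X)$. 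Hence every class of $\Nef_X^+$, isotropic or not, already lies in some closed translate $g(\Pi_i)$, and Lemma \ref{lemma-if-translate-itersects-Nef-cone-then-it-is-contained-in-it} identifies $\Nef_X\cap g(\Pi)$ with the single subcone $g(\Pi_i)$ whenever $g(\Pi)$ meets $\Amp_X$. This is where the ``$+$'' decoration does its work: the boundary behaviour is packaged once and for all into the fundamental-domain theorem for $\MV_X^+$, rather than being re-derived by approximation from the ample cone. To repair your write-up you would either have to import that covering statement (at which point your stage two collapses into the paper's one-line argument) or supply a genuinely new treatment of the isotropic rays, which your sketch does not yet contain.
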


\begin{proof}
$\Nef_X^+$ is contained in $\MV^+_X$ and the latter is a union of $\Bir(X)$-translates 
of the $\Pi_i$'s. $\Nef_X^+$ is equal to the union of the translates of the $\Pi_i$ intersecting its interior, 
by Lemma \ref{lemma-if-translate-itersects-Nef-cone-then-it-is-contained-in-it}. 
These translates are the union of the $\Aut(X)$-translates 
of $g_i(\Pi_i)$, $i\in I_X$, by Lemma \ref{lemma-right-coset}. The set $I_X$ is finite, being a subset of the finite set $I$
in Equation (\ref{eq-subcone}).
\end{proof}

Let $G$ be the image of $\Aut(X)$ in the isometry group of $\Lambda$.
Let $y$ be a rational ample class in $\Amp_X$, whose stabilizer subgroup in $G$ is trivial.
Consider the following {\em Dirichlet domain} 
\begin{equation}
\label{eq-D-y}
D_y \ := \ \{x\in\Nef_X \ : \ (x,y)\leq (x,g(y)), \ \mbox{for all} \ g\in G
\}.
\end{equation}
The following Lemma was proven by Totaro in \cite[Lemma 2.2]{totaro}.
Totaro used techniques of hyperbolic geometry. Another approach 
to the proof of the Lemma can be found in Looijenga's work \cite[Application 4.15]{looijenga}.

\begin{lem}
\label{lemma-totaro}
Suppose we are given a finite set of rational polyhedral cones in $\Nef_X^+$, such that $\Nef_X^+$
is the union of their $G$-translates. Let $y$ be a rational point 
in the interior of one of these rational polyhedral cones, whose stabilizer in $G$ is trivial.
Then the Dirichlet domain $D_y$ given above is rational polyhedral, it is contained in $\Nef_X^+$, and 
${\displaystyle \Nef_X^+=\bigcup_{g\in G}  g(D_y).}$
\end{lem}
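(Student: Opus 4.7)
The plan is to follow the hyperbolic-geometry strategy of Totaro \cite[Lemma 2.2]{totaro}, exploiting that the Beauville-Bogomolov-Fujiki form on $\Lambda_\RealNumbers$ has signature $(1,\rho-1)$. Projectivizing the positive cone $\C$ yields a model of real hyperbolic space $\HH^{\rho-1}$, with hyperbolic distance
\[
d([u],[v]) \;=\; \cosh^{-1}\!\left(\tfrac{(u,v)}{\sqrt{(u,u)(v,v)}}\right), \qquad u,v\in\C.
\]
Under this identification, $G$ acts by hyperbolic isometries and is discrete, because it preserves the lattice $\Lambda$; by hypothesis, the point $[y]$ has trivial $G$-stabilizer. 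The cone $D_y$ is then the cone over the classical Dirichlet fundamental polytope for $G$ centered at $[y]$.

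First I would establish the covering $\Nef_X^+\subset\bigcup_{g\in G}g(D_y)$. The key input is the reverse Cauchy-Schwarz inequality for Lorentzian forms, $(u,v)\geq\sqrt{(u,u)(v,v)}$ for $u,v\in\overline{\C}$, with equality iff $u,v$ are proportional. For fixed $x$ in the interior of $\C$, this implies that the sublevel sets of $z\mapsto(x,z)$ on the hyperboloid $\{z:(z,z)=(y,y)\}$ are compact hyperbolic balls around $[x]$, and therefore meet the discrete orbit $Gy$ in only finitely many points. Hence the minimum of $g\mapsto(x,g(y))$ on $G$ is attained at some $g_0\in G$, which gives $g_0^{-1}(x)\in D_y$. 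Points of $\Nef_X^+$ lying on $\partial\C$ are handled by passing to limits from the interior, using that $D_y$ is closed.

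The main step is to prove that $D_y$ is rational polyhedral and contained in $\Nef_X^+$. Using the hypothesis $\Nef_X^+=\bigcup_{g\in G,\,j}g(C_j)$ for finitely many rational polyhedral cones $C_1,\dots,C_k\subset\Nef_X^+$ with $y$ in the interior of $C_1$, I would show that $D_y$ meets only finitely many translates $h(C_j)$. The bisector of $[y]$ and $[h(y)]$ is hyperbolically equidistant from the two points, so every $[x]\in D_y$ satisfies $d([x],[y])\leq d([x],[h(y)])$. Combined with the finite-cover hypothesis, this should force $D_y$ to lie within a bounded hyperbolic neighborhood of $[y]$; and by discreteness of $Gy$, only finitely many $h(y)$ lie in that neighborhood, so only finitely many of the defining inequalities $(x,y)\leq(x,h(y))$ are active on $D_y$. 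Each active inequality is rational, since $y\in\Lambda_\RationalNumbers$ and $G$ preserves $\Lambda$; intersecting with the finitely many $h(C_j)\subset\Nef_X^+$ that meet $D_y$ simultaneously exhibits $D_y$ as a rational polyhedral cone and as a subset of $\Nef_X^+$.

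The main obstacle will be the finite-wall step. Discreteness of $G$ alone does not force a Dirichlet fundamental domain for a discrete subgroup of $\mathrm{Isom}(\HH^{\rho-1})$ to have only finitely many faces, since the quotient may be non-compact with cusps producing infinite-sided domains. The finite-cover hypothesis on $\Nef_X^+$ is precisely what rules out this pathology in the present setting, and the delicate point is converting ``finitely many $G$-translates of rational polyhedral cones cover $\Nef_X^+$'' into ``$D_y$ has bounded hyperbolic diameter''. Looijenga's alternative approach via arithmetic reduction theory, cited just before the lemma, may well circumvent this hyperbolic-geometry detour entirely.
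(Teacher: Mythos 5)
The paper does not actually prove this lemma: it is quoted from Totaro \cite[Lemma 2.2]{totaro}, with Looijenga \cite[Application 4.15]{looijenga} cited as an alternative route, so any comparison is with Totaro's hyperbolic-geometry argument rather than with an in-paper proof. Your sketch follows Totaro's strategy in outline, but it has a genuine gap at exactly the step you flag yourself, and the mechanism you propose for closing it would fail. You want to deduce from the finite-cover hypothesis that $D_y$ ``lies within a bounded hyperbolic neighborhood of $[y]$'' and then invoke discreteness of the orbit $Gy$ to conclude that only finitely many walls are active. But $D_y$ is in general \emph{not} of bounded hyperbolic diameter: $\Nef_X^+$ typically contains rational isotropic classes on $\partial\C$ (for instance the pullback of the hyperplane class under a Lagrangian fibration), these rays project to ideal points of $\HH^{\rho-1}$ at infinite hyperbolic distance from $[y]$, and since the $G$-translates of $D_y$ cover $\Nef_X^+$, some translate --- hence $D_y$ itself --- must contain such a ray. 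A rational polyhedral cone with an isotropic extremal ray is hyperbolically unbounded, so no diameter bound can hold, and the finiteness of active walls cannot be extracted from discreteness of $Gy$ in a metric ball. The correct finiteness statement is of a different nature: one must show directly that only finitely many $g\in G$ produce a bisector $\{x:(x,y)=(x,g(y))\}$ meeting a fixed rational polyhedral cone that contains $y$, and this is a lattice-theoretic finiteness of the same flavor as Proposition \ref{prop-set-of-intersectiong-walls-is-finite} (note that all the classes $g(y)-y$ lie in $\Lambda$ after clearing denominators and have self-intersection bounded below, since $(g(y)-y,g(y)-y)=2(y,y)-2(y,g(y))\leq 0$ is controlled by $(y,y)$ only when combined with such an argument). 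This is precisely the content of Looijenga's reduction-theoretic approach, and it is the part your proposal leaves open.

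A secondary, smaller issue: your covering argument establishes that the minimum of $g\mapsto(x,g(y))$ is attained only for $x$ in the interior of $\C$, and the extension to boundary points of $\Nef_X^+$ ``by passing to limits'' presupposes that a sequence $x_n\to x$ with $x_n\in g_n(D_y)$ has $g_n$ eventually constant (or at least finitely valued). That local finiteness of the translates is again a consequence of, not an input to, the finite-wall step, so as written the limit argument is circular. None of this is fatal to the overall strategy --- Totaro's and Looijenga's proofs do go through --- but as it stands the proposal reproduces the statement of the hard step rather than a proof of it.
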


\begin{thm} 
\label{thm-ample-cone-conj}
The Dirichlet domain $D_y$, given in Equation (\ref{eq-D-y}),  
is a  rational polyhedral cone, which is a fundamental domain for the action of $G$
on $\Nef_X^+$. In particular, $D_y$ is contained in $\Nef_X^+$. 
\end{thm}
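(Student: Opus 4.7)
The plan is to deduce the theorem from Corollary \ref{cor-nef-cone-thm} combined with Totaro's Lemma \ref{lemma-totaro}. All the serious geometry has already been done: Corollary \ref{cor-nef-cone-thm} provides a finite collection $\{g_i(\Pi_i) : i \in I_X\}$ of rational polyhedral subcones of $\Nef_X^+$ whose $G$-translates cover $\Nef_X^+$. Lemma \ref{lemma-totaro} applied to this collection will produce a rational polyhedral $D_y$ contained in $\Nef_X^+$ whose $G$-translates cover $\Nef_X^+$, provided a suitable $y$ exists. So the theorem will follow once we (a) exhibit a rational ample $y$ in the interior of some $g_i(\Pi_i)$ with trivial $G$-stabilizer, and (b) verify that distinct $G$-translates of $D_y$ have disjoint interiors, which is the fundamental-domain condition not explicitly given by Lemma \ref{lemma-totaro}.

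For (a), I would argue as follows. The group $G$ acts on $\Lambda$ through a subgroup of the isometry group $\LieO(\Lambda)$, hence through a discrete subgroup of the orthogonal group of the positive cone $\C$; so the $G$-action on $\C$ is properly discontinuous. In particular, for each non-identity $g \in G$ the fixed locus $\Lambda_\RealNumbers^g$ is a proper linear subspace, and only finitely many such $g$ have fixed loci meeting a fixed compact subset of the interior of any $g_i(\Pi_i)$. Since a non-empty open rational polyhedral cone contains a rational point avoiding any finite union of proper linear subspaces defined over $\RationalNumbers$, we can choose a rational $y$ in the interior of some $g_i(\Pi_i) \subset \Amp_X$ whose stabilizer in $G$ is trivial; this is the $y$ used in Equation \eqref{eq-D-y}.

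For (b), one checks the standard Dirichlet-domain disjointness. Suppose $x$ lies in the interior of $D_y$ and also in $g(D_y)$ for some $g \in G$. Then $g^{-1}x \in D_y$, so by definition of $D_y$ and $G$-invariance of the Beauville-Bogomolov-Fujiki pairing,
\[
(x, g(y)) = (g^{-1}x, y) \leq (g^{-1}x, g^{-1}(y)) = (x,y).
\]
Combined with the defining inequality $(x,y) \leq (x, g(y))$ for $D_y$, this forces $(x,y) = (x, g(y))$. If $g \neq 1$, then since the stabilizer of $y$ in $G$ is trivial we have $g(y) \neq y$, and the equality $(x, y - g(y)) = 0$ places $x$ on the boundary hyperplane defining $D_y$ at the wall separating $y$ from $g(y)$, contradicting the assumption that $x$ lies in the interior of $D_y$.

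Putting these ingredients together: Lemma \ref{lemma-totaro} gives $\Nef_X^+ = \bigcup_{g \in G} g(D_y)$ with $D_y$ rational polyhedral and contained in $\Nef_X^+$; the disjointness of interiors in (b) promotes this to the statement that $D_y$ is a fundamental domain for the $G$-action on $\Nef_X^+$. The main obstacle is really the invocation of Lemma \ref{lemma-totaro}, which in turn rests on Corollary \ref{cor-nef-cone-thm}; everything beyond that is essentially bookkeeping about properly discontinuous actions. Because $G$ has finite index in the image of $\Aut(X)$ (in fact equals it by definition), the conclusion transfers back to the action of $\Aut(X)$ itself on $\Nef_X^+$, yielding Theorem \ref{thm-introduction}.
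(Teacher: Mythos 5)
Your proposal is correct and follows exactly the paper's route: the paper's entire proof is the one-line deduction of the theorem from Corollary \ref{cor-nef-cone-thm} via Lemma \ref{lemma-totaro}. The additional points you verify --- existence of a rational $y$ with trivial stabilizer via proper discontinuity, and disjointness of interiors of distinct translates of $D_y$ by the standard Dirichlet-domain computation --- are routine details the paper leaves implicit (delegating them to Totaro and Looijenga), and your treatment of them is sound.
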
 

\begin{proof}
The statement follows from Corollary \ref{cor-nef-cone-thm}, by Lemma \ref{lemma-totaro}.
\end{proof}

\hide{
\subsection{Moduli spaces of sheaves on $K3$ and abelian surfaces}
Assume that $X$ is a smooth
and projective moduli space of stable sheaves on a projective $K3$ surface $S$. 
Let $\widetilde{\Lambda}:=H^*_{alg}(S,\Integers)$ be the algebraic Mukai lattice of the $K3$ surface $S$.
Let $v\in\widetilde{\Lambda}$ be the Mukai vector of the moduli space $X$. 
There is a natural isomorphism $\Lambda\cong v^\perp$, where
$v^\perp$ is the sublattice of $\widetilde{\Lambda}$ orthogonal to $v$ with respect to the 
Mukai pairing on $\widetilde{\Lambda}$. Set $n:=\frac{1}{2}\dim_{\ComplexNumbers}X$ and 
assume that $n\geq 2$. Then $(v,v)=2n-2$. 
The orthogonal projection $p:\widetilde{\Lambda}\rightarrow v^\perp_\RationalNumbers=\Lambda_\RationalNumbers$ is given by
\[
p(x)= x-\frac{(v,x)}{(v,v)}v.
\]
Let $\widetilde{\Sigma}\subset \widetilde{\Lambda}$ be the set
\[
\widetilde{\Sigma}:=\{x\in \widetilde{\Lambda} \ : \
(x,x)\geq -2 \ \mbox{and} \ 0\leq (v,x)\leq (v,v)/2
\}.
\]
Let 
\[
\Sigma:=p(\widetilde{\Sigma})\subset \frac{1}{(v,v)}\Lambda
\]
be the image of $\widetilde{\Sigma}$ 
via the projection map. Then $\Sigma$ is a $\Bir(X)$-invariant subset of $\frac{1}{(v,v)}\Lambda$.
In fact, it is invariant under the larger group of  monodromy operators preserving the Hodge structure.

\begin{lem}
\label{lemma-lower-bound}
Elements $\lambda$ of $\Sigma$ satisfy $(\lambda,\lambda)\geq -2-(v,v)/4$.
\end{lem}

\begin{proof}
Let $x$ be an element of $\widetilde{\Sigma}$ such that $\lambda=p(x)=x-\frac{(v,x)}{(v,v)}v$.
Then
\[
(\lambda,\lambda)=(x,x)-\frac{(v,x)^2}{(v,v)}\geq -2-(v,v)/4.
\]
\end{proof}

Let $Y$ be an irreducible holomorphic symplectic manifold and $f:Y\dashrightarrow X$ 
a birational map. 
We get the isometry $f_*:H^{1,1}(Y,\Integers)\rightarrow H^{1,1}(X,\Integers)$
mapping $\MV_Y$ onto $\MV_X$. Choose an ample class
$y\in \Amp_Y$. Set 
\[
\Sigma_f:=\{\lambda\in\Sigma \ : \ (\lambda,f_*(y))\geq 0\}.
\]
Theorem 12.1 in the paper of Bayer-Macri implies the equality
\begin{equation}
\label{eq-bayer-macri}
f_*\left(\Nef_Y\right) =
\{x \in \C \ : \
(x,\lambda)\geq 0, \ \mbox{for all} \ \lambda\in\Sigma_f 
\}.
\end{equation}
We used above also the statement that every irreducible holomorphic symplectic manifold $Y$ 
birational to $X$ is itself a moduli space of Bridgeland stable objects on a $K3$ surface
\cite[Theorem 1.2]{bayer-macri-mmp}.
The analogous results in the abelian surface case were obtained independently by the second author 
in \cite{yoshioka-ample-cone}.
}

%
\section{A rational polyhedral cone intersects only finitely many walls}
\label{sec-set-of-intersectiong-walls-is-finite}
We prove Proposition \ref{prop-set-of-intersectiong-walls-is-finite} in this section.
%
Let $\Lambda$ be a lattice of signature $(1,n-1)$. We abbreviate $(x,x)$ by $(x^2)$.
Then the cone
$$
\{x \in \Lambda_{\Bbb R} \mid (x^2 )>0 \}
$$
has two connected components.
We take $h \in \Lambda$ with $(h^2)>0$.
Then 
$$
\C^+:=\{x \in \Lambda_{\Bbb R} \mid (x^2 )>0, (x,h)>0 \}
$$
is a connected component.
For $x \in \C^+$,
we have a decomposition
$x=a h+\xi$, $(\xi,h)=0$.
\begin{NB}
$0<(x^2)=a^2(h^2)+(\xi^2)$.
Hence the sign of $a=(x,h)/(h^2)$ is constant on a connected component.
Hence $a>0$ fix the connected component. 
\end{NB}

\begin{lem}\label{lem:intersection}
For $x_1,x_2 \in \overline{\C^+}$ with $x_1 \ne 0$ and
$x_2 \ne 0$,
$(x_1,x_2)>0$ unless
$(x_1^2)=0$ and $x_2 \in {\Bbb R}x_1$.
\end{lem}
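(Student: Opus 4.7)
The plan is to reduce everything to the decomposition $x_i=a_i h+\xi_i$ with $(\xi_i,h)=0$ already introduced just before the lemma, and then apply Cauchy--Schwarz on the orthogonal complement $h^\perp$, which is negative definite because $\Lambda$ has signature $(1,n-1)$ and $(h^2)>0$.

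First, I would observe that for any nonzero $x\in\overline{\C^+}$ the coefficient $a=(x,h)/(h^2)$ is strictly positive. Indeed if $a=0$ then $x=\xi\in h^\perp$, so $(x^2)=(\xi^2)\le 0$ by negative definiteness; combined with $x\in\overline{\C^+}$ this forces $(\xi^2)=0$ and hence $\xi=0$, contradicting $x\ne 0$. So write $x_j=a_j h+\xi_j$ with $a_j>0$ and $(\xi_j^2)\le 0$, and note that the condition $(x_j^2)\ge 0$ translates into the key inequality
\[
a_j^2(h^2)\ \ge\ -(\xi_j^2)\ge 0.
\]

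Next, since $-(\,\cdot\,,\,\cdot\,)$ is a positive semidefinite (in fact definite) form on $h^\perp$, Cauchy--Schwarz gives
\[
(\xi_1,\xi_2)\ \ge\ -\sqrt{(-\xi_1^2)(-\xi_2^2)}.
\]
Multiplying the two inequalities $a_j^2(h^2)\ge -(\xi_j^2)$ and taking square roots yields $a_1 a_2 (h^2)\ge \sqrt{(-\xi_1^2)(-\xi_2^2)}$. Therefore
\[
(x_1,x_2)\ =\ a_1 a_2(h^2)+(\xi_1,\xi_2)\ \ge\ a_1 a_2(h^2)-\sqrt{(-\xi_1^2)(-\xi_2^2)}\ \ge\ 0,
\]
which gives non-negativity for free.

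The main (small) obstacle is the equality analysis. Suppose $(x_1,x_2)=0$; then both inequalities above must be equalities. The equality $a_j^2(h^2)=-(\xi_j^2)$ says $(x_j^2)=0$. The Cauchy--Schwarz equality on the negative definite space $h^\perp$ says $\xi_1,\xi_2$ are linearly dependent, and the sign condition needed for equality in $(\xi_1,\xi_2)\ge -\sqrt{(-\xi_1^2)(-\xi_2^2)}$ forces $\xi_2=\lambda\xi_1$ with $\lambda\ge 0$. The degenerate subcases $\xi_1=0$ or $\lambda=0$ are excluded because they would force $(x_1^2)>0$ or $(x_2^2)>0$, contradicting $(x_j^2)=0$. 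With $\lambda>0$, the relation $a_j^2(h^2)=-(\xi_j^2)$ forces $a_2=\lambda a_1$, so $x_2=\lambda x_1\in\RealNumbers x_1$ and $(x_1^2)=0$, which is precisely the excluded case in the statement. This completes the plan.
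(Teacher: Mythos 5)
Your proof is correct and follows essentially the same route as the paper: the decomposition $x_j=a_jh+\xi_j$ with $a_j>0$, Cauchy--Schwarz on the negative definite complement $h^\perp$, and the same equality analysis forcing $(x_1^2)=(x_2^2)=0$ and $x_2\in\RealNumbers_{>0}x_1$. The only difference is that you explicitly justify $a_j>0$ for nonzero elements of $\overline{\C^+}$, which the paper relegates to an aside; this is a welcome but minor addition.
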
 

\begin{proof}
We write $x_1=a_1 h+\xi_1$ and $x_2=a_2 h+\xi_2$,
where $a_1,a_2>0$ and 
$\xi_1, \xi_2 \in h^\perp$.
Then the Schwarz inequality imiplies
that $|(\xi_1,\xi_2)| \leq \sqrt{-(\xi_1^2)}\sqrt{-(\xi_2^2)}$.
\begin{NB}
$-(\bullet,\bullet)$ is positive definite on
$h^\perp$.
\end{NB}
Since $(x_1^2),(x_2^2) \geq 0$,
$a_1 \sqrt{(h^2)} \geq \sqrt{-(\xi_1^2)}$ and
$a_2 \sqrt{(h^2)} \geq \sqrt{-(\xi_2^2)}$.
Hence we have  
$(x_1,x_2)=a_1 a_2 (h^2)+(\xi_1,\xi_2) \geq 0$.
Moreover if the equality holds, then
$a_1 \sqrt{(h^2)}= \sqrt{-(\xi_1^2)}$,
$a_2 \sqrt{(h^2)}= \sqrt{-(\xi_2^2)}$ and
$-(\xi_1,\xi_2)=\sqrt{-(\xi_1^2)}\sqrt{-(\xi_2^2)}$.
Hence $(x_1^2)=(x_2^2)=0$. 
If $\xi_1=0$, then $(x_1^2)=0$ implies that $x_1=0$.
Hence $\xi_1 \ne 0$. We also have $\xi_2 \ne 0$.
Then
$\xi_1= y \xi_2$, $y \in {\Bbb R}_{>0}$.
Since $a_1^2 (h^2)=-y^2(\xi_2^2)$ and
$a_2^2 (h^2)=-(\xi_2^2)$, we have $a_1=y a_2$, which implies that 
$x_1=y x_2$.
\end{proof}

Assume that $x_1, x_2 \in \overline{\C^+}$ and
${\Bbb R} x_1+{\Bbb R}x_2$ is a 2-plane.
Then $((x_1+x_2)^2)>0$.

\begin{lem}\label{lem:P-finite} 
Let $P$ be a 2-plane in $\Lambda_{\Bbb R}$ defined over
${\Bbb Q}$.
If $P_{\Bbb Q}$ contains an isotropic vector $x$,
then 
$$
\{ v \in \Lambda \mid v^\perp \cap P \cap \C^+ \ne \emptyset,
(v^2)>-N  \} 
$$  
is a finite set.
\end{lem}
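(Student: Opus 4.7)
The plan is to exploit the isotropic vector $x$ to build a $\mathbb{Q}$-rational hyperbolic basis $\{x, x'\}$ of $P$, and then bound the two integers $(v, x)$ and $(v, x')$ using the lower bound on $(v^2)$ together with the negative definiteness of $P^{\perp}$. We may assume the set in question is non-empty, so that $P \cap \C^+ \neq \emptyset$. Pick any $y \in P \cap \C^+$ and, replacing $x$ by $-x$ if necessary, arrange $x \in \overline{\C^+}$. By Lemma \ref{lem:intersection}, $(x, y) > 0$, so the restriction of the form to the $2$-plane $P$ is non-degenerate, hence of signature $(1,1)$. Completing $x$ to a $\mathbb{Q}$-basis $\{x, w\}$ of $P$, the vector $x' := 2(x, w)\, w - (w, w)\, x \in P_{\mathbb{Q}}$ satisfies $(x', x') = 0$ and $(x, x') = 2(x, w)^2 > 0$. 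After clearing denominators, we assume $x, x' \in \Lambda$ and set $d := (x, x') \in \mathbb{Z}_{>0}$. In the basis $\{x, x'\}$ the restriction of the form to $P$ reads $((ax + bx')^2) = 2d\, ab$, so the component $P \cap \C^+$ is an open quadrant, say $\{a > 0, b > 0\}$.

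For $v \in \Lambda$, the line $v^\perp \cap P$ meets $P \cap \C^+$ if and only if either (i) $(v, x) = (v, x') = 0$, i.e.\ $v \in P^\perp \cap \Lambda$, or (ii) $(v, x)$ and $(v, x')$ are non-zero with opposite signs. In case (i), $P^\perp$ is negative definite (because $P$ has signature $(1,1)$ inside $\Lambda_{\mathbb{R}}$ of signature $(1, n-1)$), so $(v^2) > -N$ confines $v$ to a bounded region of $P^\perp_{\mathbb{R}}$, containing only finitely many points of the full-rank lattice $P^\perp \cap \Lambda$. In case (ii), decompose $v = v_P + v_{\perp}$ orthogonally with $v_P \in P_{\mathbb{R}}$ and $v_\perp \in P^\perp_{\mathbb{R}}$. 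Inverting the pairing in the basis $\{x, x'\}$ yields $v_P = \tfrac{(v, x')}{d}\, x + \tfrac{(v, x)}{d}\, x'$, so that
\[
(v_P^2) \;=\; \frac{2\,(v, x)(v, x')}{d}.
\]
Since $(v_\perp^2) \leq 0$, we have $(v_P^2) \geq (v^2) > -N$, whence $(v, x)(v, x') > -Nd/2$. Combined with (ii), the positive integers $|(v, x)|$ and $|(v, x')|$ have product less than $Nd/2$, leaving finitely many possibilities for the pair $\bigl((v, x), (v, x')\bigr)$, and hence finitely many values of $v_P$. For each fixed $v_P$, the condition $(v^2) > -N$ becomes $(v_\perp^2) > -N - (v_P^2)$, which by negative definiteness of $P^\perp$ carves out a bounded subset of the affine space $v_P + P^\perp_{\mathbb{R}}$; this bounded subset contains only finitely many points of $\Lambda$.

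The only real technical step is the construction of the second rational isotropic vector $x'$, which in turn rests on showing that the form restricted to $P$ is non-degenerate. This is where Lemma \ref{lem:intersection} enters, through the inequality $(x, y) > 0$ for $y \in P \cap \C^+$; everything else is a direct computation in the hyperbolic basis plus a standard lattice-point count in the negative-definite complement.
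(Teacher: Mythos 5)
Your proof is correct and follows essentially the same route as the paper's: both build a rational hyperbolic basis $\{x, z\}$ of $P$ (the paper takes $z = y - \tfrac{(y^2)}{2(x,y)}x$ for a rational $y \in P \cap \C^+$, you take $x' = 2(x,w)w - (w,w)x$ for a rational basis vector $w$), decompose $v$ accordingly, bound the product of the two integral coordinates using $(v^2) > -N$ and the negative definiteness of $P^\perp$, and count. Your treatment is in fact slightly more careful in that you isolate the case $v \in P^\perp$, which the paper's inequality "$ab < 0$" silently skips.
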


\begin{proof}
We may assume that $x \in \Lambda$.
If the intersection  $P \cap \C^+$ is empty, we are done. 
Assume that the intersection is non-empty.
Since 
$P_{\Bbb Q}$ is dense in $P$,
we can choose $y \in P_{\Bbb Q} \cap \C^+$.
Since $h^\perp$ is negative definite,
$(h, x) \ne 0$. We may assume that $(x,h)>0$.
Then $x \in \overline{\C^+}$.
\begin{NB}
$((x+th)^2)>0$ for $0<t$.
\end{NB}
By Lemma \ref{lem:intersection},
we have $(x,y)>0$.
Set $z:=y-\frac{(y^2)}{2(x,y)}x$. Then 
$(z,x)>0$ and $(z^2)=0$.
Replacing $z$ by $m z$, $m \in {\Bbb Z}_{>0}$,
we may assume that $z \in \Lambda$. 
An element $v$ of $\Lambda$ admits the decomposition 
$v=a x+b z+\xi$, with $\xi \in P^\perp$.
If $v^\perp \cap P \cap \C^+ \ne \emptyset$, then
$ab<0$. In that case $ab$ is bounded, $-N<ab<0$, 
since $N>-(v^2)=-ab(x,z)-(\xi^2) \geq -ab>0$. 
Now,
$a=\frac{(v,z)}{(x,z)}$ 
and $b=\frac{(v,x)}{(x,z)}$ belong to $\frac{1}{(x,z)}{\Bbb Z}$, since $v \in \Lambda$.
Therefore the choice of $a,b$ is finite. 
The element $(x,z) \xi$ belongs to the negative definite sublattice of 
$\Lambda$ orthogonal to $P$.
The choice of $\xi$ is also finite, since $N>N+ab(x,z)>-(\xi^2) \geq 0$. 
Therefore our claim holds.
\end{proof}

\begin{lem}\label{lem:P-finite2}
Assume that 
$x_1, x_2$ is a linearly independent pair in $\Lambda \cap \C^+$.
Then 
\begin{equation}
\label{eq-set-of-wall-intersecting-a-line-segment}
\{ v \in \Lambda \mid (v,s x_1+(1-s)x_2)=0, \ \mbox{for some} \ 0 \leq s \leq 1, 
(v^2)>-N \}
\end{equation} 
is a finite set.
\end{lem}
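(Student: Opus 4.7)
The plan is to split into two cases according to whether the pairing restricted to the rational $2$-plane $P := {\Bbb R} x_1 + {\Bbb R} x_2$ is degenerate or not, and in each case reduce finiteness either to Lemma \ref{lem:P-finite} or to a direct bound on integer data attached to $v$.

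For the setup, set $A := (x_1^2)$, $B := (x_1,x_2)$, $C := (x_2^2)$. By hypothesis $A, C > 0$, and by Lemma \ref{lem:intersection} also $B > 0$. Since $\Lambda$ has signature $(1,n-1)$, the plane $P$ cannot be positive definite, so the Gram determinant $AC - B^2$ is non-positive. Writing $u_i := (v,x_i) \in {\Bbb Z}$, the condition that $v^\perp$ meets the segment is equivalent to $u_1 u_2 \leq 0$.

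If $AC = B^2$, the pairing on $P$ is degenerate, and its radical is a rational isotropic line in $P$; hence $P_{\Bbb Q}$ contains an isotropic vector. Since the segment lies in $P \cap \C^+$, every $v$ in the set in question satisfies $v^\perp \cap P \cap \C^+ \neq \emptyset$, and Lemma \ref{lem:P-finite} finishes this case. If instead $AC < B^2$, then $P$ has signature $(1,1)$ and $\Lambda_{\Bbb R} = P \oplus P^\perp$ with $P^\perp$ negative definite. Writing $v = a x_1 + b x_2 + w$ with $w \in P^\perp$, a short Gram matrix computation gives
$$
(v^2) \;=\; -\,\frac{C u_1^2 - 2B u_1 u_2 + A u_2^2}{B^2 - AC} \;+\; (w^2),
$$
with $(w^2) \leq 0$. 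In the regime $u_1 u_2 \leq 0$ the cross term $-2B u_1 u_2$ is non-negative, so the lower bound $(v^2) > -N$ forces both $C u_1^2 + A u_2^2$ and $-(w^2)$ to be bounded. The first bound leaves only finitely many pairs $(u_1,u_2) \in {\Bbb Z}^2$. Setting $D := B^2 - AC$, the coefficients $Da$, $Db$ are integer linear combinations of $u_1, u_2$, so $Dw = Dv - (Da)x_1 - (Db)x_2 \in \Lambda \cap P^\perp_{\Bbb Q}$, and the bound on $-(w^2)$ gives a bound on $-((Dw)^2)$. Since the form on $P^\perp \cap \Lambda$ is negative definite, only finitely many $Dw$ are possible, and hence $v$ lies in a finite set.

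The main point to be careful about is the non-degenerate case, where we cannot appeal to Lemma \ref{lem:P-finite} directly: a rational signature-$(1,1)$ plane need not contain any rational isotropic vector. What saves us is that the hypothesis of Lemma \ref{lem:P-finite2} is strictly stronger than that of Lemma \ref{lem:P-finite} — the segment is \emph{compact}, which translates into the crucial sign constraint $u_1 u_2 \leq 0$, and this eliminates the indefinite cross term $-2B u_1 u_2$ from the formula for $(v^2)$, allowing the bound $(v^2) > -N$ to control $u_1$, $u_2$, and $w$ separately.
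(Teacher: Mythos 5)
Your proof is correct, and it follows the same overall strategy as the paper: decompose $v=ax_1+bx_2+\xi$ over $P\oplus P^\perp$, use integrality of $(v,x_1),(v,x_2)$ to confine $(a,b)$ to a discrete set, and use negative definiteness of $P^\perp$ to finish. The one genuine difference is in how the $P$-component is bounded: the paper parametrizes the wall condition by $s$, shows the ratio $\lambda(s)$ is continuous on the compact interval $[0,1]$, and pins $((\lambda(s)x_1-x_2)^2)$ between two negative constants to force $b^2<NN_1$; you instead encode the wall condition as the sign constraint $u_1u_2\leq 0$ on $u_i=(v,x_i)$ and read the bound directly off the explicit formula $(v^2)=-\bigl(Cu_1^2-2Bu_1u_2+Au_2^2\bigr)/(B^2-AC)+(w^2)$, where the cross term has the favorable sign. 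This is arguably cleaner and avoids the compactness argument. One small remark: your degenerate case $AC=B^2$ is vacuous --- since $x_1\in\C^+$ has $(x_1^2)>0$, the hyperplane $x_1^\perp$ is negative definite, so the $x_1$-orthogonal component of $x_2$ has strictly negative square and $B^2-AC>0$ always (this is exactly the paper's opening observation); including the case does no harm, and your reduction of it to Lemma \ref{lem:P-finite} would be valid if it occurred.
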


\begin{proof}
We first note that
$x_2-\frac{(x_1,x_2)}{(x_1^2)}x_1 \in x_1^\perp$
is not zero and must thus have negative self intersection.
Hence $(x_1,x_2)^2-(x_1^2)(x_2^2)>0$.
An element $v$ of $\Lambda$ admits the decomposition
$v=a x_1+b x_2+\xi$, with $\xi \in x_1^\perp \cap x_2^\perp$. 
The coefficients $a, b$ belong to 
\begin{equation}
\label{eq-a-b-in-a-discrete-set}
\frac{1}{(x_1,x_2)^2-(x_1^2)(x_2^2)}{\Bbb Z},
\end{equation}
since 
$$
\begin{pmatrix}
(v,x_1)\\
(v,x_2)
\end{pmatrix}=
\begin{pmatrix}
(x_1^2) & (x_1,x_2)\\
(x_1,x_2) & (x_2^2)
\end{pmatrix}
\begin{pmatrix}
a\\
b
\end{pmatrix},
$$

Given $s$ in the interval $0\leq s\leq 1$ we have the inequality 
$$
(x_1,sx_1+(1-s)x_2)=s(x_1^2)+(1-s)(x_1,x_2)
\geq \min\{ (x_1^2),(x_1,x_2) \}>0.
$$
The function $\lambda(s):=\frac{(x_2,s x_1+(1-s)x_2)}{(x_1,s x_1+(1-s)x_2)}$
is thus continuous on the interval $0 \leq s \leq 1$. 
We have the vanishing
\[
(\lambda(s)x_1-x_2,s x_1+(1-s)x_2)=0.
\]
Now $\lambda(s)x_1 \ne x_2$ and $s x_1+(1-s)x_2$ has positive self intersection. 
We conclude the inequality $((\lambda(s)x_1-x_2)^2)<0$.
Therefore there are positive integers $N_1, N_2$ such that
$-N_2<((\lambda(s)x_1-x_2)^2)<-\frac{1}{N_1}$.

Assume that $v$ 
belongs to the set (\ref{eq-set-of-wall-intersecting-a-line-segment}). We get
the vanishing
$$(ax_1+b x_2,s x_1+(1-s)x_2)=0,$$ which yields 
$a=-b \lambda(s)$ and
$((a x_1+b x_2)^2)=((\lambda(s)x_1-x_2)^2)(b^2)$. Furthermore, we have
$$
-N_2 b^2 \leq ((\lambda(s)x_1-x_2)^2)b^2 \leq -\frac{b^2}{N_1}.
$$
The inequalities 
$$-N<(v^2)=((a x_1+b x_2)^2)+(\xi^2) \leq ((a x_1+b x_2)^2)$$ 
yield
$N N_1>b^2$.
The choice of $b$ is thus finite, since $b$ belongs to the discrete set in 
Equation (\ref{eq-a-b-in-a-discrete-set}).
Then the choice of $a$ is also finite,
by the equality $a=-b \lambda(s)$.
Since $-(\xi^2) < N+((a x_1+b x_2)^2) \leq N$,
the choice of $\xi$ is also finite.
Therefore the claim holds.
\end{proof}

\begin{prop}
\label{prop-set-of-intersectiong-walls-is-finite-v2}
Assume that $\Pi:=\sum_{i=1}^n {\Bbb R}_{\geq 0} x_i$ is a cone
in $\overline{\C^+}$ such that $x_i \in \Lambda_{\Bbb Q}$. 
Then
$$
\{ v \in \Lambda \mid (v^2)>-N, v^\perp \cap \Pi \cap \C^+ \ne \emptyset \}
$$
is a finite set.
\end{prop}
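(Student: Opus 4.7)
The plan is to reduce the proposition to Lemmas \ref{lem:P-finite} and \ref{lem:P-finite2} by analyzing, for each candidate $v$, the signs of the pairings $(v,x_i)$ with the generators of $\Pi$. I may assume $\Pi\cap\C^+\neq\emptyset$, otherwise the statement is vacuous. I fix $v\in\Lambda$ with $(v^2)>-N$ and $v^\perp\cap\Pi\cap\C^+\neq\emptyset$, choose a point $y=\sum_{i=1}^n t_i x_i$ in this intersection with $t_i\geq 0$, and partition $\{1,\ldots,n\}=I^+\sqcup I^-\sqcup I^0$ according to the sign of $(v,x_i)$, so that the identity $\sum_i t_i(v,x_i)=(v,y)=0$ links the sign data with the coefficients $t_i$.

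First I would treat the case in which both $I^+$ and $I^-$ are non-empty. For any $i\in I^+$ and $j\in I^-$, the vectors $x_i,x_j$ must be linearly independent: otherwise Lemma \ref{lem:intersection} would place them on a common isotropic ray of $\overline{\C^+}$, forcing $(v,x_i)$ and $(v,x_j)$ to share a sign. The intermediate value theorem then produces $s\in(0,1)$ with $z:=sx_i+(1-s)x_j\in v^\perp$, and an expansion of $(z^2)$ combined with Lemma \ref{lem:intersection} (which gives $(x_i,x_j)>0$) yields $(z^2)>0$, so $z\in P\cap\C^+$, where $P:={\Bbb R}x_i+{\Bbb R}x_j$ is a rational $2$-plane. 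If both $x_i,x_j$ lie in $\C^+$ I apply Lemma \ref{lem:P-finite2} to the pair $(x_i,x_j)$; if at least one of them is isotropic, $P$ contains an isotropic class over ${\Bbb Q}$ and I apply Lemma \ref{lem:P-finite} to $P$. The union over the finitely many pairs $(i,j)$ still yields a finite set.

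The remaining case is when $I^+$ or $I^-$ is empty, so $\sum_i t_i(v,x_i)=0$ is a sum of terms of a single weak sign with $t_i\geq 0$, forcing $(v,x_i)=0$ for every $i\in J:=\{i:t_i>0\}$. Then $v$ lies in $V_J^\perp$, where $V_J:=\sum_{i\in J}{\Bbb R}x_i$; since $y\in V_J\cap\C^+$, the subspace $V_J$ contains a class of positive self-intersection, and the signature $(1,n-1)$ of $\Lambda$ forces $V_J^\perp$ to be negative definite. Standard finiteness for integral vectors of bounded norm in a negative definite lattice handles each $J$, and the union over the finitely many subsets $J\subseteq\{1,\ldots,n\}$ finishes this case. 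The main obstacle is the clean separation between the two regimes: Lemma \ref{lem:P-finite2} strictly requires endpoints in the open cone $\C^+$, so whenever an isotropic generator participates in the mixed-sign case I must fall back on Lemma \ref{lem:P-finite}, and the verification that $z\in\C^+$ as well as the negative definiteness of $V_J^\perp$ both rest on Lemma \ref{lem:intersection} and the signature of $\Lambda$.
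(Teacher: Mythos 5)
Your proof is correct and follows essentially the same route as the paper: both arguments locate a pair of generators $x_i,x_j$ with $(v,x_i)(v,x_j)<0$, produce a point of $v^\perp\cap\C^+$ in the $2$-plane $\mathbb{R}x_i+\mathbb{R}x_j$ by the intermediate value theorem together with Lemma \ref{lem:intersection}, and then invoke Lemma \ref{lem:P-finite} or Lemma \ref{lem:P-finite2} according to whether that rational plane contains an isotropic vector. The only (harmless) divergences are that you dispose of the unmixed-sign case directly via the negative definiteness of $V_J^\perp$, where the paper funnels that case through the sets $V_{ij}$ as well, and that your appeal to a ``common isotropic ray'' when ruling out linear dependence of $x_i$ and $x_j$ is unnecessarily elaborate --- two nonzero proportional vectors of $\overline{\C^+}$ are automatically positive multiples of one another, which already forces $(v,x_i)$ and $(v,x_j)$ to share a sign.
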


\begin{proof}
We may assume that $\{x_1, \dots, x_n\}$ is a minimal set of generators of the cone $\Pi$. Then the 
$x_i$'s are pairwise linearly independent. 
We set
$\Pi_{ij}:={\Bbb R}_{\geq 0} x_i+{\Bbb R}_{\geq 0} x_j$.
Applying Lemma \ref{lem:P-finite} or Lemma \ref{lem:P-finite2} we conclude that
$$
V_{ij}:=\{ v \in \Lambda \mid (v^2)>-N, v^\perp \cap \Pi_{ij} 
\cap \C^+ \ne \emptyset \}
$$
is a finite set. If $n=2$ we are done. Assume that $n\geq 3$.
Assume that $v \in \Lambda$ satisfies $(v^2)>-N$ and
$v^\perp \cap \Pi \cap \C^+ \ne \emptyset$.
If the cardinality $\#(v^\perp \cap \{ x_1,...,x_n \})$ is $\geq 2$, then
$v^\perp \cap \Pi_{ij} 
\cap \C^+ \ne \emptyset$ for some $i,j$.
Hence $v \in \cup_{i,j} V_{ij}$.
Assume that $\#(v^\perp \cap \{ x_1,...,x_n \})\leq 1$. 
Set $J:=\{i \ : 1\leq i \leq n, \ \mbox{and} \ (v,x_i)\neq 0\}$. Then $\#(J)\geq 2$.
The non-emptyness of $v^\perp \cap \Pi \cap \C^+$ implies the existence of coefficients $a_i$, such that
$\sum_{i\in J} a_i (v,x_i)=0$, where
$a_i \geq 0$ for all $i\in J$, and $a_i > 0$ for some $i\in J$. 
Then
$a_i (v,x_i)$  and $a_j(v,x_j)$ have different
sign for some pair of indices $i, j \in J$. In particular,
$(v,x_i)(v,x_j)<0$. It follows that the intersection
$v^\perp \cap \Pi_{ij} 
\cap \C^+$ is non-empty.
Hence $v \in \cup_{i,j} V_{ij}$. Therefore
our claim holds.
\end{proof}



\begin{thebibliography}{BNR}

\bibitem[AV]{amerik-verbitsky} Amerik, E., Verbitsky, M.:
{\em Rational curves on hyperk\"{a}hler manifolds.\/}
Preprint arXiv:1401.0479.


\bibitem[Be]{beauville}
Beauville, A.: {\em Vari\'{e}t\'{e}s K\"ahleriennes dont la premiere classe de Chern
est nulle.}  J. Diff. Geom. 18, p. 755--782 (1983).

\bibitem[BHT]{BHT} Bayer, A., Hassett, B., Tschinkel, Y.:
{\em Mori cones of holomorphic symplectic varieties of $K3$ type.\/}
Preprint arXiv:1307.2291.


\bibitem[BM]{bayer-macri-mmp} Bayer, A., Macri, E.: {\em
MMP for moduli of sheaves on $K3$s via wall-crossing: nef and movable cones, lagrangian fibrations.\/}
Preprint arXiv:1301.6968v3.


\bibitem[Bou]{boucksom-zariski-decomposition} Boucksom, S.:
{\em Divisorial Zariski decompositions on compact complex manifolds.\/}  
Ann. Sci. \'{E}cole Norm. Sup. (4)  37  (2004),  no. 1, 45--76. 


\bibitem[HT]{hassett-tschinkel-moving-and-ample-cones}
Hassett, B., Tschinkel, Y.:
{\em Moving and ample cones of holomorphic symplectic fourfolds.\/}
Geom. Funct. Anal. Vol. 19 (2009) 1065--1080.


\bibitem[Hu]{huybrechts-kahler-cone}
Huybrechts, D.: 
{\em  The K\"{a}hler cone of a compact hyperk\"{a}hler manifold.\/}
Math. Ann. 326 (2003), no. 3, 499--513.

\bibitem[Ka]{kawamata} Kawamata, Y.:
{\em On the cone of divisors of Calabi-Yau fiber spaces.\/} 
Internat. J. Math. 8 (1997), no. 5, 665--687.

\bibitem[L]{looijenga} Looijenga, E.: 
{\em Discrete automorphism groups of convex cones of finite type.\/}
Preprint arXiv:0908.0165v1.

\bibitem[Ma1]{markman-torelli} Markman, E.:
{\em  A survey of Torelli and monodromy results for hyperkahler manifolds.\/}
In  ``Complex and Differential Geometry'', W. Ebeling et. al. (eds.),
Springer Proceedings in Math. 8, (2011), pp 257--323.

\bibitem[Ma2]{markman-lagrangian} Markman, E.:
{\em  Lagrangian fibrations of holomorphic-symplectic varieties of $K3^{[n]}$-type.\/}
Electronic preprint arXiv:1301.6584. To appear in ``Algebraic and Complex Geometry - In Honor of
Klaus Hulek 60th Birthday'', 
Springer Proceedings in Math.


\bibitem[Mat]{matsushita-1310} Matsushita, D.: 
{\em On isotropic divisors on irreducible symplectic manifolds.\/} Electronic preprint arXiv:1310.0896.


\bibitem[Mon]{mongardi}
Mongardi, G.:
{\em A note on the K\"{a}hler and Mori cones of hyperk\"{a}hler manifolds.\/}
Preprint arXiv:1307.0393.


\bibitem[Mor]{morrison-compactifications} Morrison, D. R.:
{\em Compactifications of moduli spaces inspired by mirror symmetry.\/}
Journ\'{e}es de G\'{e}om\'{e}trie Alg\'{e}brique d'Orsay (Orsay, 1992).
Ast\'{e}risque No. 218 (1993), 243--271. 


\bibitem[O'G]{ogrady-weight-two} O'Grady, K.: 
{\em The weight-two Hodge structure of moduli spaces of sheaves on a K3 
surface.\/} J. Algebraic Geom. 6 (1997), no. 4, 599--644.

\bibitem[T]{totaro} Torato, B.:
{\em The cone conjecture for Calabi-Yau pairs in dimension two.\/}
Electronic preprint arXiv:0901.3361.



\bibitem[Y]{yoshioka-ample-cone} Yoshioka, K.:
{\em Bridgeland stability and the positive cone 
of the moduli spaces of stable objects on
an abelian surface.\/}
Preprint arXiv:1206.4838.
\end{thebibliography}
\end{document}